\documentclass[reqno,12pt]{amsart} 
\setlength{\textheight}{23cm}
\setlength{\textwidth}{16cm}
\setlength{\oddsidemargin}{0cm}
\setlength{\evensidemargin}{0cm}
\setlength{\topmargin}{0cm}
\newcommand{\supp}{\mathop{\mathrm{supp}}\nolimits}
\numberwithin{equation}{section}
\usepackage{amssymb}
\usepackage[mathscr]{eucal}
\usepackage{amsmath}
\usepackage{latexsym}
\usepackage{amsthm}
%
%
%
\theoremstyle{plain} 
\newtheorem{theorem}{\indent\sc Theorem}[section]
\newtheorem{lemma}[theorem]{\indent\sc Lemma}

\newtheorem{proposition}[theorem]{\indent\sc Proposition}

\theoremstyle{definition} 

\newtheorem{remark}[theorem]{\indent\sc Remark}

%

%
\begin{document}
\setcounter{page}{1}
\title[Weighted Weak (1,1) estimates for one-sided oscillatory singular integrals]
{Weighted Weak (1,1) estimates for one-sided oscillatory singular integrals$^*$}
\author[Zunwei Fu ]{Zunwei Fu}
\author[Shanzhen Lu]{Shanzhen Lu} 
\author[Shuichi Sato]{Shuichi Sato} 
\author[Shaoguang Shi]{Shaoguang Shi} 
\thanks{2010 {\it Mathematics Subject Classification.\/}
Primary 42B20; Secondary 42B25.
\endgraf
{\it Key Words and Phrases.} One-sided weight, one-sided oscillatory integral,
Calder\'{o}n-Zygmund kernel.}

%
\thanks{ 
$^{*}$This work was partially supported by
 NSF of China (Grant Nos. 10871024, 10901076 and 10931001), NSF of Shandong Province
 (Grant No. Q2008A01) and the Key Laboratory of Mathematics and Complex System (Beijing Normal
University), Ministry of Education, China.}

\address{
School of Sciences\endgraf
Linyi
University \endgraf
Linyi 276005\endgraf
P. R. China}
\email{lyfzw@tom.com}

\address{
School of Mathematical Sciences\endgraf
Beijing Normal
University\endgraf
Beijing 100875\endgraf
P. R. China}
\email{lusz@bnu.edu.cn}

\address{
Department of Mathematics \endgraf
Faculty of Education \endgraf
Kanazawa
University\endgraf
Kanazawa 920-1192\endgraf
Japan}
\email{shuichi@kenroku.kanazawa-u.ac.jp}
\address{
School of Mathematical Sciences\endgraf
Beijing Normal
University\endgraf
Beijing 100875\endgraf
and\endgraf
School of Sciences\endgraf
Linyi
University \endgraf
Linyi 276005\endgraf
P. R. China}
\email{shishaoguang@yahoo.com.cn}


\maketitle
\begin{abstract}
We consider one-sided weight classes of Muckenhoupt type and
study the weighted weak type (1,1) norm inequalities of a class of one-sided oscillatory singular integrals with smooth kernel.
\end{abstract}
\section{Introduction} 
 Oscillatory integrals have been an essential part of harmonic analysis; three 
chapters are devoted to them in the celebrated Stein's book \cite{St}.
  Many important operators in harmonic analysis are some versions of oscillatory integrals, such as the Fourier transform, the Bochner-Riesz means, the Radon 
transform in CT technology and so on. For a more complete account on  
oscillatory integrals in classical harmonic analysis, we would like to refer the interested reader to \cite{G}, \cite{Lu1}, \cite{Lu2}, \cite{LDY}, \cite{LZ}, 
\cite{PS} and references therein. Another early impetus for the study of 
oscillatory integrals came with their application to number theory \cite{B}. 
In more recent times, the operators fashioned from oscillatory integrals, such 
as pseudo-differential operator in PDE become another motivation to study 
them. Based on the estimates of some kinds of oscillatory integrals, one can 
establish the well-posedness theory of a class of dispersive equations, 
for some of this works, we refer to \cite{CM}, \cite{KPV1}, \cite{KPV2}.

This paper is focused on a class of oscillatory singular integrals related to 
the one defined by Ricci and Stein \cite{RS}
$$Tf(x)=\mathrm{p.v.}\int_{\mathbb{R}}e^{iP(x,y)}K(x-y)f(y)\,dy,$$
where $P(x,y)$ is a real valued polynomial defined on $\mathbb{R}\times\mathbb{R}$, and $K\in C^{1}(\mathbb{R}\setminus\{0\})$ is a Calder\'{o}n-Zygmund kernel which satisfies:
\begin{gather}
|K(x)|\leq\frac{C}{|x|},\ \ \ \    |\nabla K(x)|\leq \frac{C}{|x|^{2}},
\\
 \int_{a<|x|<b}K(x)\,dx =0 \qquad \text{for all $a, b$ $(0<a<b)$.}
 \end{gather}

\begin{theorem}[\cite{RS}]    Suppose $K$ satisfies $(1.1)$, $(1.2)$.
Then for any real polynomial $P(x,y)$, the oscillatory singular integral
operator $T$
is of type $(L^{p}(\mathbb{R}), L^{p}(\mathbb{R}))$, $1<p<\infty$, where its
operator norm is bounded by a constant depending on the total degree of
$P$, but not on the coefficients of $P$ in other respects.
\end{theorem}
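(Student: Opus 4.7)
The plan is to follow Ricci and Stein's induction on the total degree $d$ of $P$, preceded by two reductions that make the induction usable. First, writing $P(x,y)=P_1(x)+P_2(y)+R(x,y)$ with $R$ containing only mixed monomials $x^\alpha y^\beta$ ($\alpha,\beta\ge 1$), the identity
\[
 Tf(x)=e^{iP_1(x)}\int e^{iR(x,y)}K(x-y)\bigl(e^{iP_2(y)}f(y)\bigr)\,dy
\]
shows that the $L^p$ operator norm of $T$ equals that of the operator with phase $R$ acting on the unimodularly rescaled input $e^{iP_2}f$, so we may assume $P$ contains only mixed terms. Second, the class of kernels satisfying (1.1)--(1.2) is invariant under $K(u)\mapsto\lambda K(\lambda u)$ with the same constants, while the companion dilation $(x,y)\mapsto(\lambda x,\lambda y)$ conjugates $T$ to an operator of the same form with $a_{\alpha\beta}$ replaced by $\lambda^{\alpha+\beta}a_{\alpha\beta}$ and preserves the $L^p$ norm. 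I would use this freedom to normalize one distinguished coefficient of $P$ to absolute value one.

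The base case $d=0$ reduces to the classical Calder\'on--Zygmund theorem for $K$. For the inductive step, I would split $T=T_0+T_\infty$ at $|x-y|=1$ via a smooth cutoff. For $T_0$, write $e^{iP(x,y)}=e^{iP(x,x)}+\bigl(e^{iP(x,y)}-e^{iP(x,x)}\bigr)$: the first summand reproduces a truncated Calder\'on--Zygmund operator modulated by the unimodular factor $e^{iP(x,x)}$, handled by the base case, while the second is majorised pointwise by $|P(x,y)-P(x,x)|\cdot|K(x-y)|$. After a further decomposition of $\mathbb{R}$ into unit cells, the factor $|y-x|$ inside $P(x,y)-P(x,x)$ absorbs the $1/|x-y|$ singularity of $K$, leaving a non-singular kernel directly estimated by Schur's test. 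For $T_\infty$ the oscillation of $e^{iP}$ must be exploited; a $TT^*$ argument reduces the $L^2$ bound to estimating
\[
 L(x,z)=\int e^{i(P(x,y)-P(z,y))}K_\infty(x-y)\overline{K_\infty(z-y)}\,dy.
\]
Van der Corput's lemma applied to the $y$-integral, after isolating the leading $y$-monomial of $P(x,\cdot)-P(z,\cdot)$, would give decay in $|x-z|$, making $L$ integrable and yielding the $L^2$ estimate. Passage from $L^2$ to $L^p$ for $p\ne 2$ follows from a standard Calder\'on--Zygmund argument, since on $|x-y|>1$ the non-translation-invariant kernel $e^{iP(x,y)}K(x-y)$ satisfies a H\"ormander-type smoothness condition in $x$.

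The hard part is keeping the bound uniform in the coefficients of $P$: induction on degree alone is too crude, because the scaling above normalises only one coefficient while the others may be arbitrary. The remedy, which I expect to be the main technical obstacle, is to strengthen the induction hypothesis so that the induction parameter is a combinatorial datum such as the set of exponents of nonzero monomials of $P$, and to verify that the van der Corput step either reduces the $y$-degree (sending us to the inductive hypothesis) or shrinks the exponent set in a well-founded manner. Arranging this carefully is precisely what delivers a bound depending only on the total degree of $P$ and not on its coefficients.
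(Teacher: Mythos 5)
This theorem is quoted from Ricci--Stein [RS]; the paper does not reprove it, but its own argument for the one-sided weighted analogue (Sections 2--4) follows the same skeleton, and comparing your sketch against that skeleton exposes a genuine gap in your treatment of the local part $T_0$. You compare the phase $P(x,y)$ with $P(x,x)$ and claim the error kernel $|P(x,y)-P(x,x)|\,|K(x-y)|$ is non-singular on unit cells. It is, but its size is governed by $\sup|\partial_y P|$ on the cell, i.e.\ by \emph{all} the coefficients of $P$, whereas your dilation normalization fixes only the one leading coefficient $a_{kl}$. The resulting Schur bound therefore blows up with the coefficients, which is exactly what the theorem forbids. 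The correct comparison (see the paper's Section 4, and [RS]) is cell-dependent: on the cell centered at $h$ one writes $P(x,y)=a_{kl}(x-h)^k(y-h)^l+R(x,y,h)$ and compares with the phase $R(x,y,h)+a_{kl}(y-h)^{k+l}$; the error is then at most $C|a_{kl}|\,|x-y|=C|x-y|$ with an absolute constant, and --- crucially --- the main term is \emph{not} the base case but a polynomial satisfying the induction hypothesis (lower degree in $x$, or degree $k$ in $x$ and lower degree in $y$). This also shows why the right induction is the lexicographic double induction on the degrees in $x$ and $y$ that the paper sets up in Section 2, rather than the unspecified ``combinatorial datum'' of your closing paragraph: you correctly diagnose that induction on total degree alone is too crude, but the fix is already dictated by the $T_0$ step, and without it your inductive step is not actually set up.

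A second, smaller problem is the passage from $L^2$ to $L^p$ for $T_\infty$. The kernel $e^{iP(x,y)}K_\infty(x-y)$ does not satisfy a H\"ormander condition uniformly in the coefficients, since differentiating in $x$ produces a factor $\partial_xP(x,y)$ of uncontrolled size. The coefficient-uniform route, visible in the paper's Lemmas 3.6--3.7, is to prove the decaying $L^2$ bound $\|W_j^+\|_{L^2}\le C2^{-j\delta}$ for each dyadic piece, combine it with the trivial uniform bound $|W_j^+f|\le CM^+f$ (hence $\|W_j^+\|_{L^p}\le C$ for all $p$), interpolate, and sum the geometric series in $j$. Your $TT^*$/van der Corput plan for the $L^2$ decay is the right idea, though making the van der Corput step uniform over the non-leading coefficients again relies on the induction structure described above.
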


Let $A_{p}(1<p<\infty)$ denote the Muckenhoupt classes \cite{CF}. This class consists of positive locally integrable functions (weight functions) $w$ for which
$$\sup_I\left(\frac{1}{|I|}\int_{I}w(x)dx\right)\left(\frac{1}{|I|}\int_{I}w(x)^{1-p'}dx\right)^{p-1}<\infty,$$
where the supremum is taken over all intervals $I\subset\mathbb{R}$ and $1/p+1/p'=1$. 

In 1992, Lu and Zhang \cite{LZ} gave the weighted result of Theorem 1.1.
\begin{theorem} Suppose $K$ satisfies $(1.1)$, $(1.2)$.
Then for any real polynomial $P(x,y)$, the oscillatory singular integral
operator $T$ is of type $(L^{p}(w),L^{p}(w))$, where $w\in A_{p}$, $1<p<\infty$. Here its
operator norm is bounded by a constant depending on the total degree of
$P$, but not on the coefficients of $P$ in other respects.
\end{theorem}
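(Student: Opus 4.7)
The plan is to extend the unweighted Ricci-Stein theorem (Theorem 1.1) to the weighted setting via a Fefferman-Stein sharp maximal function estimate combined with the Coifman-Fefferman machinery for $A_p$ weights. As a preliminary reduction, decompose $P(x,y)=P_1(x)+P_2(y)+Q(x,y)$, where $Q$ collects only the genuinely mixed monomials $x^{i}y^{j}$ with $i,j\ge 1$. Since multiplication by the unimodular factor $e^{iP_1(x)}$ is an isometry on $L^{p}(w)$ and $e^{iP_2(y)}$ can be absorbed into $f$ without altering $\|f\|_{L^{p}(w)}$, we may assume $P=Q$. Then we induct on $\deg Q$; the base case $Q\equiv 0$ reduces $T$ to the Calder\'on-Zygmund operator with kernel $K$, for which the weighted $L^{p}$ bound is the classical Coifman-Fefferman result.

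The heart of the inductive step is a pointwise sharp function estimate
\begin{equation*}
(Tf)^{\#}(x)\le C\, M_{r}f(x),\qquad M_{r}f:=\bigl(M|f|^{r}\bigr)^{1/r},
\end{equation*}
for some $r>1$, with $C$ depending only on $\deg P$. Given a cube $I$ with center $x_I$, split $f=f\chi_{2I}+f\chi_{(2I)^{c}}$. The local part is controlled by H\"older's inequality together with the unweighted $L^{r}$ bound from Theorem 1.1 applied on $I$. For the non-local part, factor $e^{iP(x,y)}=e^{iP(x_I,y)}\,e^{i[P(x,y)-P(x_I,y)]}$: the first factor is constant in $x$, and $P(x,y)-P(x_I,y)$ has, thanks to the mixed structure of $Q$, strictly smaller joint degree after $x-x_I$ is factored out, so the inductive hypothesis applies. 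Combining this with the size and smoothness bounds $|K|\le C|x|^{-1}$, $|\nabla K|\le C|x|^{-2}$ and the cancellation condition $(1.2)$ delivers the desired sharp function bound.

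Once the sharp function estimate is in hand, the conclusion follows from standard Muckenhoupt theory: openness of the $A_p$ classes gives $w\in A_{p/r}$ for some $r>1$, and then the Fefferman-Stein inequality together with the boundedness of $M$ on $L^{p/r}(w)$ yields
\begin{equation*}
\|Tf\|_{L^{p}(w)}\le C\,\|(Tf)^{\#}\|_{L^{p}(w)}\le C\,\|M_{r}f\|_{L^{p}(w)}\le C\,\|f\|_{L^{p}(w)}.
\end{equation*}

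The main obstacle is arranging the non-local decomposition so that the residual phase lies strictly within the scope of the inductive hypothesis while preserving uniformity of the constant in the coefficients of $P$; this is exactly the feature carried over from Theorem 1.1, and it is essential that both the Ricci-Stein bound and the $A_p$ classes are invariant under the dilations one may use to normalize a leading mixed coefficient to unity.
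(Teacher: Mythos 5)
There is a genuine gap at the heart of your argument: the pointwise sharp function estimate $(Tf)^{\#}(x)\le CM_{r}f(x)$ with $C$ uniform in the coefficients is not available, and your treatment of the non-local part does not repair this. Take the simplest mixed phase $P(x,y)=xy$ and an interval $I$ of unit length centered at $x_I$. For $x\in I$ the phase difference satisfies $|e^{iP(x,y)}-e^{iP(x_I,y)}|\approx\min(1,|x-x_I|\,|y|)$, which is of order $1$ for $|y|\gtrsim 1$; hence the oscillation of $T(f\chi_{(2I)^{c}})$ over $I$ is of size $\int_{|y|\gtrsim 1}|f(y)|\,|x_I-y|^{-1}\min(1,|y|)\,dy$, which is not dominated by $M_r f(x_I)$ (it can be as large as $\|f\|_{L^1}$ against a fixed $Mf(x_I)$). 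Your proposed fix --- writing $e^{iP(x,y)}=e^{iP(x_I,y)}e^{i[P(x,y)-P(x_I,y)]}$ and invoking the inductive hypothesis --- does not work for two reasons. First, $P(x,y)-P(x_I,y)=(x-x_I)y$ has the same bidegree $(1,1)$ as $P$; factoring out $x-x_I$ lowers the degree of the cofactor but not of the phase itself, so the residual operator is not within the scope of the induction. Second, and more fundamentally, the inductive hypothesis is an \emph{operator norm} bound on $L^{p}(w)$, and no such bound can be converted into the \emph{pointwise} control of $|T(f\chi_{(2I)^{c}})(x)-c_I|$ for $x\in I$ that the sharp function estimate requires. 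The difficulty is that oscillation helps only in an $L^{2}$-averaged sense at scales beyond the critical one determined by the coefficients, while the sharp function must be controlled over cubes of every size.

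For comparison, the paper does not prove Theorem 1.2 but quotes it from Lu and Zhang; the actual proof (and the scheme this paper follows for its one-sided weak $(1,1)$ analogue) runs differently. One normalizes the leading mixed coefficient to $|a_{kl}|=1$ by dilation, splits $K=K_0+K_\infty$ at the unit scale $|x-y|=1$, handles $T_0$ by localizing to unit intervals where the phase can be replaced by one depending essentially only on $y$ plus lower-order terms (so the induction hypothesis applies), and handles $T_\infty$ by the dyadic decomposition $\sum_j W_j$, the Ricci--Stein $L^{2}$ decay $\|W_j\|_{L^2}\lesssim 2^{-\delta j}$, the trivial weighted bound $\|W_j\|_{L^{2}(w^{1+\varepsilon})}\le C$, and Stein--Weiss interpolation with change of measures to produce weighted geometric decay that can be summed. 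This architecture respects the single critical scale set by the coefficients, which is exactly what the all-scales sharp function approach fails to do. Your preliminary reductions (stripping $P_1(x)$, $P_2(y)$, the base case, and the dilation invariance remark) are correct and are indeed part of the standard argument, but the inductive step needs to be rebuilt along the lines above.
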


For the case $p=1$, Chanillo and Christ \cite{CC} gave a supplement for Theorem 1.1.
\begin{theorem}
Under the same assumption as in Theorem $1.1$, we have
$$
||Tf||_{L^{1,\infty}}\leq C\|f\|_{L^{1}},
$$
where $L^{1,\infty}$ denotes the weak $L^{1}$ space, and the constant $C$
is independent of $P$ if the total degree of the polynomial is fixed.
\end{theorem}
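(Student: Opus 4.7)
The plan is to prove the weak $(1,1)$ estimate by induction on the total degree of the mixed part of $P(x,y)=\sum_{\alpha,\beta}a_{\alpha\beta}x^\alpha y^\beta$, combined at each step with a Calder\'on--Zygmund decomposition. The base case consists of polynomials without mixed terms (i.e.\ $a_{\alpha\beta}=0$ whenever $\alpha,\beta\geq 1$); then $e^{iP(x,y)}=e^{iP_1(x)}e^{iP_2(y)}$, so up to multiplication by unimodular factors $T$ reduces to a classical Calder\'on--Zygmund convolution applied to $e^{iP_2(\cdot)}f(\cdot)$, and the weak $(1,1)$ bound follows from standard theory.

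For the inductive step, fix $\lambda>0$ and perform a Calder\'on--Zygmund decomposition of $f$ at height $\lambda$: write $f=g+\sum_j b_j$ with each $b_j$ supported on an interval $I_j$ of center $c_j$ and length $\ell_j$, having mean zero, $\|b_j\|_{L^1}\leq C\lambda|I_j|$, and $\sum_j|I_j|\leq C\lambda^{-1}\|f\|_{L^1}$. The good part obeys $\|g\|_{L^2}^2\leq C\lambda\|f\|_{L^1}$, so Chebyshev together with the $L^2$ bound from Theorem~1.1 controls $|\{|Tg|>\lambda/2\}|$. Letting $I_j^*$ be a suitable dilate of $I_j$, one has $\sum|I_j^*|\leq C\lambda^{-1}\|f\|_{L^1}$, and it remains to estimate $|\{x\notin\bigcup_j I_j^*:|Tb(x)|>\lambda/2\}|$.

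For each bad interval $I_j$ I would Taylor-expand the phase about $y=c_j$,
\[
P(x,y)=P(x,c_j)+\sum_{k\geq 1}(y-c_j)^k P_{j,k}(x),
\]
extract the unimodular $x$-factor $e^{iP(x,c_j)}$, and partition the remaining polynomial by comparing $\ell_j^k|P_{j,k}(x)|$ to $1$. On the \emph{small} regime the total oscillation of $\sum_k(y-c_j)^kP_{j,k}(x)$ over $y\in I_j$ is $O(1)$; a first-order expansion of the exponential, combined with $\int b_j=0$ and the smoothness $(1.1)$ of $K$, produces the standard $\ell_j/|x-c_j|^2$ gain that sums outside $I_j^*$ to $C\lambda^{-1}\|f\|_{L^1}$. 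On the \emph{large} regime, one identifies a dominating term $(y-c_j)^{k_0}P_{j,k_0}(x)$ with $\ell_j^{k_0}|P_{j,k_0}(x)|\gtrsim 1$; after absorbing the $c_j$-translation and the unimodular factor, the resulting operator is an oscillatory singular integral whose mixed polynomial degree in $(x,y)$ is strictly less than that of $P$, so the inductive hypothesis applies with a constant depending only on $\deg P$.

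The principal obstacle is the bookkeeping of the small/large decomposition: one must carry it out uniformly across all bad cubes and all derivatives $P_{j,k}$, verify that in the large regime the residual operator is genuinely of an oscillatory type with strictly smaller mixed degree (so that the induction actually closes), and verify that the cancellation condition $(1.2)$ on $K$ is preserved under the reductions. Keeping all constants dependent only on $\deg P$ and not on the coefficients $a_{\alpha\beta}$ is exactly what the uniform statement of Theorem~1.1 provides, and it is what permits the induction to yield the coefficient-independent weak $(1,1)$ bound.
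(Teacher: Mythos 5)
This statement (Theorem 1.3) is quoted from Chanillo--Christ and is not proved in the paper, but the paper's proof of its one-sided weighted analogue (Theorem 2.1, Sections 3--4) follows the same architecture as [CC], and your proposal diverges from it at the two decisive points. First, your ``large regime'' reduction does not close the induction. The Taylor expansion $P(x,y)-P(x,c_j)=\sum_{k\ge1}(y-c_j)^kP_{j,k}(x)$ has exactly the same mixed degree as $P$ (it \emph{is} $P$ minus a pure $x$-polynomial), and isolating a dominant monomial $(y-c_j)^{k_0}P_{j,k_0}(x)$ does not lower it either: for $P(x,y)=x^ky^l$ one gets $P_{j,l}(x)=x^k$, i.e.\ the full mixed degree back again. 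Moreover, even granting a degree reduction, you cannot apply a weak $(1,1)$ inductive hypothesis to each bad piece $b_j$ separately and sum, since $L^{1,\infty}$ is not countably subadditive; the entire point of the Calder\'on--Zygmund machinery is to convert the bad part into a genuine $L^1$ or $L^2$ estimate, and for $|x-y|$ large the oscillatory kernel admits no such $L^1$ estimate from mean-zero alone.

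The two ingredients your sketch is missing are precisely the ones the paper (following [CC], [RS], [Sa]) uses. (i) The induction is used, and closes, only on the \emph{local} part $K_0=K\chi_{\{|x-y|\le1\}}$: translating to an arbitrary $h$ and writing $P(x,y)=a_{kl}(x-h)^k(y-h)^l+R(x,y,h)$, one observes that for $|x-h|<1/4$, $0<y-x<1$ the leading mixed monomial differs from the pure $y$-polynomial $a_{kl}(y-h)^{k+l}$ by $O(|x-y|)$; the pure $y$-phase is absorbed into $f$, the error is dominated by the maximal function, and the remainder has lower degree --- this is the content of (4.1)--(4.5). (ii) The \emph{far} part $K_\infty$ is handled with no induction at all, by dyadic decomposition $K_\infty=\sum_jK_j$, the van der Corput/Ricci--Stein $L^2$ decay $\|W_j^+\|_{L^2}\lesssim 2^{-j\varepsilon}$ (Lemma 3.7), and the key Chanillo--Christ orthogonality estimate $\bigl\|\sum_{j\ge s}W_j^+(\mathcal B_{j-s})\bigr\|_{L^2}^2\le C\lambda2^{-s}\sum_j\|\mathcal B_j\|_{L^1}$ (Lemma 3.3), which pits the kernel scale $2^j$ against the bad-interval scale $2^{j-s}$ and yields a summable gain in $s$. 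Without an $L^2$ estimate of this type against the bad functions, the oscillation is never exploited where it is needed, and your small/large decomposition cannot be repaired into a proof.
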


Let $ A_{1}$ be the class of weight functions $w$ satisfying
$Mw(x)\leq Cw(x)$  a.e., where $M$ denotes
the Hardy-Littlewood maximal operator
$$Mf(x)=\sup_{h>0}\frac{1}{2h}\int_{x-h}^{x+h}|f(y)|\,dy. $$
We write $w(E)=\int_E w$  for a measurable set $E$. The third author of this paper gave the weighted version of
Theorem 1.2.
\begin{theorem}[\cite{Sa}]
Under the same assumption as in Theorem $1.1$, if $w\in A_{1}$, then
$$
\sup_{\lambda>0}\lambda w\left(\left\{x\in \mathbb{R}:|Tf(x)|>\lambda\right\}\right)\leq C\|f\|_{L^{1}(w)}
$$
where $C$ depends on the total degree of $P$ and, in other respects,  is
independent of the coefficients of $P$.
\end{theorem}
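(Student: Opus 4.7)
My plan is to prove the estimate by induction on the total degree $d$ of $P(x,y)$, combining a Calderón--Zygmund decomposition with the weighted $L^2$ theory furnished by Theorem 1.2 and with oscillation-based cancellation for the bad part. The base case $d=0$ is the classical weighted weak $(1,1)$ bound for a smooth Calderón--Zygmund operator on $A_1$. Assuming the inequality holds for all polynomials of degree less than $d$ (with constant independent of the coefficients), I fix $\lambda>0$ and decompose $f = g + b$, $b = \sum_j b_j$, with $\mathrm{supp}\, b_j \subset I_j$, $\int b_j = 0$, $\|b_j\|_{L^1}\leq C\lambda|I_j|$, $\|g\|_\infty\leq C\lambda$, the intervals $I_j$ pairwise disjoint with $\sum_j|I_j|\leq C\|f\|_{L^1}/\lambda$.

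The $A_1$ hypothesis upgrades the decomposition to the weighted scale: since $w(I_j)/|I_j|\leq C\,\mathrm{ess\,inf}_{I_j} w$, one obtains $w(I_j)\leq (C/\lambda)\int_{I_j}|f|w$ and $\|g\|_{L^1(w)}\leq C\|f\|_{L^1(w)}$. Using $A_1\subset A_2$, Theorem 1.2 with $p=2$ gives $\|Tg\|_{L^2(w)}^2\leq C\|g\|_{L^2(w)}^2\leq C\lambda\|g\|_{L^1(w)}\leq C\lambda\|f\|_{L^1(w)}$, so $w(\{|Tg|>\lambda/2\})\leq C\|f\|_{L^1(w)}/\lambda$ by Chebyshev. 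The doubling of $A_1$ weights gives $w(\bigcup_j 2I_j)\leq C\sum_j w(I_j)\leq C\|f\|_{L^1(w)}/\lambda$, so only $\sum_j\int_{(2I_j)^c}|Tb_j|\,w$ remains to be controlled.

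For a fixed $I_j$ centered at $y_j$, I write $P(x,y) = P(x,y_j) + Q_j(x,y-y_j)$ and pull out the $x$-only phase. Using $\int b_j=0$,
$$
Tb_j(x) = e^{iP(x,y_j)}\Bigl(\int K(x-y)\bigl(e^{iQ_j(x,y-y_j)}-1\bigr)b_j(y)\,dy + \int (K(x-y)-K(x-y_j))b_j(y)\,dy\Bigr).
$$
Two regimes are distinguished according to whether the coefficients of $Q_j$ in $y$ are small or large compared with the appropriate power of $|I_j|$. In the small-oscillation regime the mean value theorem makes $|e^{iQ_j}-1|$ controllable on $I_j$; combined with the smoothness bound $|\nabla K(x)|\leq C|x|^{-2}$ this yields the standard $C|I_j|/|x-y_j|^2$ decay, whose weighted integral over $(2I_j)^c$ is bounded by $Cw(I_j)$ via a layer-cake computation and the $A_1$ condition. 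In the large-oscillation regime, a monomial of top $y$-degree in $Q_j$ factors as a pure phase in $y$ (harmlessly modulating $b_j$ without destroying its vanishing moment after compensation) times a function of $x$ alone, reducing the problem to an oscillatory singular integral whose polynomial has strictly smaller degree in the mixed variables; the inductive hypothesis applies.

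The main obstacle is the coefficient-uniform bookkeeping in the large-oscillation step: the splitting of $Q_j$ into a factorable leading piece, a strictly lower-degree remainder, and a Calderón--Zygmund error must be performed so that every constant produced depends only on $d$ and on the $A_1$ constant of $w$, and never on the coefficients of $P$. This uniformity is what keeps the induction closed and is precisely the place where the smoothness of $K$ and the cancellation $\int b_j = 0$ must be used in tandem; once it is in hand, summing over $j$ gives $\sum_j\int_{(2I_j)^c}|Tb_j|w\leq C\sum_j\int_{I_j}|f|w\leq C\|f\|_{L^1(w)}$, completing the estimate.
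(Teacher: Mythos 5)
The statement you are proving is quoted by the paper from \cite{Sa} and not reproved there, but the paper's proof of its one-sided analogue (Theorem 2.1, Sections 2--4) follows the same scheme as \cite{Sa} and \cite{CC}, so it is the right benchmark. Your frame (induction, Calder\'on--Zygmund decomposition, weighted $L^2$ for the good part) matches, but your treatment of the bad part has a genuine gap at exactly the point you flag as ``the main obstacle,'' and that obstacle is not bookkeeping --- it is the whole difficulty of the theorem. The classical estimate $\sum_j\int_{(2I_j)^c}|Tb_j|\,w\leq C\sum_j w(I_j)$ cannot be run for the kernel $e^{iP(x,y)}K(x-y)$: its $y$-derivative is of size $|\partial_yP(x,y)|\,|x-y|^{-1}+|x-y|^{-2}$, so the H\"ormander-type cancellation you invoke via $\int b_j=0$ produces constants depending on the coefficients of $P$. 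Your small/large oscillation dichotomy does not repair this, because the coefficients of $Q_j(x,\cdot)$ as a polynomial in $y-y_j$ are themselves polynomials in $x$, so the dichotomy is not a dichotomy in $j$ but varies over the unbounded set $(2I_j)^c$; and in the large-oscillation regime the claimed factorization is false: $e^{iax^k(y-y_j)^l}$ with $k,l\geq1$ is not a pure $x$-phase times a pure $y$-phase, modulating $b_j$ by an $x$-dependent phase destroys $\int b_j=0$ (the promised ``compensation'' is never specified), and the inductive hypothesis cannot be applied to an operator whose input already depends on $x$.

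For contrast, the argument the paper actually runs (after \cite{RS}, \cite{CC}, \cite{Sa}) uses a double induction on the degrees $(k,l)$ in $x$ and $y$ separately, normalizes $|a_{kl}|=1$ by dilation, and splits the kernel at the absolute scale $|x-y|=1$. The local piece $T_0$ is handled by freezing a translation parameter $h$, replacing $a_{kl}(x-h)^k(y-h)^l$ by the pure $y$-monomial $a_{kl}(y-h)^{k+l}$ (covered by the inductive hypothesis) at the cost of an error $O(|x-y|)$ that kills the kernel singularity, and then averaging over $h$. The far piece is decomposed dyadically into $W_j$; the Ricci--Stein $L^2$ decay of Lemma 3.7 is interpolated (with change of measure, using $w^{1+\varepsilon}\in A_1$) to give weighted $L^2$ decay, and the bad part is controlled by a Vargas-type interpolation between an unweighted $L^2$ estimate (Lemma 3.3) and a weighted $L^1$ estimate (Lemma 3.4) via $\int_0^\infty\min(N,t)\,t^{-2+\theta}\,dt=C_\theta N^\theta$. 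None of these ingredients --- in particular the $L^2$ decay of the dyadic pieces, which is the engine of every known proof --- appears in your proposal, so the argument as written does not close.
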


The study of weights for one-sided operators was motivated not only
as the generalization of the theory of both-sided ones but also their
natural appearance in harmonic analysis; for example, it is required when
we treat the one-sided Hardy-Littlewood maximal operator \cite{Saw}
$$M^{+}f(x)=\sup_{h>0}\frac{1}{h}\int_{x}^{x+h}|f(y)|\,dy,\eqno(1.3)$$
and
$$M^{-}f(x)=\sup_{h>0}\frac{1}{h}\int_{x-h}^{x}|f(y)|\,dy\eqno(1.4)$$
arising in the ergodic  maximal function. The classical Dunford-schwartz
ergodic theorem can be considered as the first result about weights for (1.3) and (1.4). In \cite{Saw}, Sawyer introduced the one-sided $A_{p}$ classes
$A_{p}^{+}$, $A_{p}^{-}$; they are defined by the following conditions:
$$
A_{p}^{+}:\quad A_{p}^{+}(w):=\sup_{a<b<c}\frac{1}{(c-a)^{p}}\int_{a}^{b}w(x)\,dx\left(\int_{b}^{c}w(x)^{1-p'}\,dx\right)^{p-1}<\infty,
$$
$$
A_{p}^{-}:\quad A_{p}^{-}(w):=\sup_{a<b<c}\frac{1}{(c-a)^{p}}\int_{b}^{c}w(x)\,dx\left(\int_{a}^{b}w(x)^{1-p'}\,dx\right)^{p-1}<\infty,
$$
 when $1<p<\infty$;  also, for $p=1$,
$$
A_{1}^{+}:\quad M^{-}w\leq Cw,
$$
$$
A_{1}^{-}:\quad M^{+}w\leq Cw,
$$
for some constant $C$.
The smallest constant $C$ for which the above inequalities are satisfied will be denoted by $A_{1}^{+}(w)$ and $A_{1}^{-}(w)$.
$A_{p}^{+}(w)$ ($A_{p}^{-}(w)$), $p\geq 1$, will be called the
$A_{p}^{+}$ ($A_p^-$) constant of $w$.

\begin{theorem}[\cite{Saw}]  Let $M^+$ be as in $(1.3)$.
\begin{enumerate}
\renewcommand{\labelenumi}{(\alph{enumi})}
\item Let $1\leq p<\infty$. Then there exists $C>0$ such that the inequality
$$\sup_{\lambda>0} \lambda^pw\left(\left\{x\in \mathbb{R}:|M^{+}f(x)|>\lambda\right\}\right)\leq C\|f\|_{L^{p}(w)}^p$$
holds for all $f$, if and only if $w\in A_{p}^{+}$.
\item Let $1<p<\infty$. Then there exists $C>0$ such that the inequality
$$\|M^{+}f\|_{L^{p}(w)}\leq C\|f\|_{L^{p}(w)}$$
holds for all $f\in L^{p}(w)$, if and only if $w\in A_{p}^{+}$.
\end{enumerate}
\end{theorem}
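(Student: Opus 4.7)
The plan splits the proof into the easy (necessity) and hard (sufficiency) directions of (a), and then uses a self-improvement and interpolation argument to deduce (b) from (a).

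For \emph{necessity} in (a), fix $a<b<c$ and, for $1<p<\infty$, set $\sigma=w^{1-p'}$. I would test the weak-type inequality against $f=\sigma\chi_{(b,c)}$: for $x\in(a,b)$, the choice $h=c-x$ in the definition of $M^+$ yields $M^+f(x)\geq(c-a)^{-1}\int_b^c\sigma\,dy$. Denoting this lower bound by $\lambda$, the weak-type inequality at level $\lambda/2$ gives
\[
w((a,b))\,\lambda^p\leq C\int\sigma^p\,w\,dy=C\int_b^c\sigma\,dy,
\]
which rearranges to exactly the $A_p^+$ condition. The $p=1$ case is analogous, testing with $f=\chi_{(b,c)}$ and sending $a\to b^-$ at a Lebesgue point of $w$ to produce $M^-w\leq Cw$ almost everywhere.

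For \emph{sufficiency}, the engine is a one-sided Calder\'{o}n-Zygmund decomposition of the level set $E_\lambda=\{M^+f>\lambda\}$. I would write $E_\lambda$ as a disjoint union of its open connected components $(a_j,b_j)$ and exploit $M^+f\leq\lambda$ at the (finite) endpoints: in particular, $\int_{b_j}^{b_j+h}|f|\,dy\leq\lambda h$ for all $h>0$, while for each $x\in(a_j,b_j)$ some average to the right of $x$ exceeds $\lambda$. Applying H\"{o}lder's inequality on a reference interval $J_j$ placed strictly to the right of $(a_j,b_j)$, and using the $A_p^+$ condition to pass from $\sigma$-averages on $J_j$ to $w$-averages on $(a_j,b_j)$, I expect to arrive at
\[
\lambda^p\,w((a_j,b_j))\leq C\int_{\widetilde I_j}|f|^p\,w\,dy
\]
on a controlled enlargement $\widetilde I_j$ of $(a_j,b_j)$. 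Summing over $j$, with bounded overlap of the enlargements, produces the weak-type bound. For $p=1$, the condition $M^-w\leq Cw$ lets me replace the $w$-average on $(a_j,b_j)$ by the pointwise value of $w$, eliminating the need for H\"{o}lder.

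For the strong-type statement (b), I would invoke openness of the $A_p^+$ class: $w\in A_p^+$ with $p>1$ implies $w\in A_{p-\varepsilon}^+$ for some $\varepsilon>0$, via a Gehring-type one-sided reverse H\"{o}lder inequality. Part (a) at exponent $p-\varepsilon$ then yields weak $(p-\varepsilon,p-\varepsilon)$ boundedness of $M^+$, and combined with the trivial $L^\infty(w)\to L^\infty(w)$ bound, Marcinkiewicz interpolation delivers the strong $(p,p)$ inequality. The main obstacle throughout is the sufficiency covering step: the asymmetric $A_p^+$ condition (pairing $w$ on the left with $\sigma$ on the right) must be aligned with the rightward-looking geometry of $M^+$, so one must position $J_j$ strictly to the right of each $(a_j,b_j)$ and verify bounded overlap of the enlargements $\widetilde I_j$; a symmetric Calder\'{o}n-Zygmund decomposition would destroy this asymmetry. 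The reverse H\"{o}lder inequality underlying (b) is a secondary but still nontrivial technical step.
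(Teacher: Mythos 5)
This is Theorem 1.5, which the paper quotes from Sawyer \cite{Saw} without giving a proof, so there is no in-paper argument to compare against; I assess your plan against the standard (Sawyer) proof. Your necessity argument for (a) is essentially complete and correct: the identity $\sigma^p w=\sigma$ holds, and testing against $f=\sigma\chi_{(b,c)}$ (truncating $\sigma$ first if it is not yet known to be locally integrable) yields exactly the $A_p^+$ condition. One slip: for $p=1$ you must keep $a$ fixed and send $c\to b^+$, shrinking the interval carrying $f$ to a Lebesgue point, to get $\frac{1}{b-a}\int_a^b w\leq Cw(b)$, i.e. $M^-w\leq Cw$; sending $a\to b^-$ as you propose produces only the useless inequality $w\leq CM^+w$.

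In the sufficiency step the geometry is slightly off and the crux is left open. By the rising sun lemma (Lemma 2.1 of \cite{Saw}), a bounded component $(a_j,b_j)$ of $\{M^+f>\lambda\}$ satisfies $\int_x^{b_j}|f|\geq\lambda(b_j-x)$ for every $x\in(a_j,b_j)$; hence the mass of $f$ relevant to the component lies in $(x,b_j)\subset(a_j,b_j)$, not ``strictly to the right of $(a_j,b_j)$,'' and since components are pairwise disjoint no enlargement $\widetilde I_j$ or bounded-overlap argument is needed --- that is a two-sided Vitali reflex you should discard. The genuine difficulty is exactly the step you defer: H\"{o}lder gives $\lambda^p(b_j-x)^p\leq\int_x^{b_j}|f|^pw\cdot\bigl(\int_x^{b_j}\sigma\bigr)^{p-1}$, while $A_p^+$ pairs $\bigl(\int_x^{b_j}\sigma\bigr)^{p-1}$ with $\int_a^x w$ for $a<x$; a naive choice of $x$, or a naive dyadic iteration toward $b_j$, either loses control near $b_j$ or overcounts $\int|f|^pw$, so a stopping-time decomposition of $(a_j,b_j)$ in the style of Sawyer is required. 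That is the heart of the theorem and is missing from the plan. For (b), note that the classical reverse H\"{o}lder inequality is \emph{false} for one-sided weights ($w(x)=e^x$ lies in $A_1^+$ but is not even doubling), so your ``Gehring-type one-sided reverse H\"{o}lder inequality'' must be the weak one-sided version of Mart\'{\i}n-Reyes, Ortega Salvador and de la Torre \cite{MOT}; alternatively, $A_p^+\subset A_{p-\varepsilon}^+$ follows directly from the factorization $w=w_1w_2^{1-p}$ together with the self-improvement $w_2^{1+\delta}\in A_1^-$, both recorded in Proposition 1.7. Granting that openness, your interpolation step for (b) is fine.
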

\begin{remark}
Let us remark here and after that similar results can be obtained for
the left-hand-side operator
by changing the condition  $A_{p}^{+}$ by $A_{p}^{-}$.
\end{remark}
Together with the characterizations of the weighted inequalities for $M^{+}$ and $M^{-}$, Sawyer obtained some properties of the classes $A_{p}^{+}$ and $A_{p}^{-}$.
\begin{proposition}[\cite{Saw}]\begin{enumerate}
\renewcommand{\labelenumi}{(\alph{enumi})}
\item If $w\in A_{1}^{+}$, then $w^{1+\varepsilon}\in A_{1}^{+}$ for some
$\varepsilon>0$.
\item $w\in A_{p}^{+}$ for $1<p<\infty$, if and only if there exists $w_{1}\in A_{1}^{+}$ and $w_{2}\in A_{1}^{-}$ such that $w=w_{1}(w_{2})^{1-p}$.
\item If $1\leq p<\infty$, then $A_{p}=A_{p}^{+}\bigcap A_{p}^{-}$, $A_{p}\subset A_{p}^{+}$, $A_{p}\subset A_{p}^{-}$.
\item   $A_p^+\subset A_r^+$, $A_p^-\subset A_r^-$ if $1\leq p\leq r$.
\end{enumerate}
\end{proposition}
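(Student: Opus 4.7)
My plan is to handle the four assertions in order of increasing difficulty. Parts (c) and (d) are essentially bookkeeping from the definitions; (a) is a self-improvement via a one-sided reverse H\"older inequality; and (b), the one-sided Jones factorization, is where the real work lies. For (d), the standard H\"older monotonicity argument from the classical $A_p$ theory transcribes directly: for $1\le p\le r$, apply H\"older to $\int_b^c w^{1-r'}$ with exponent $(r-1)/(p-1)$ to embed the $A_r^+$ constant into the $A_p^+$ constant. For (c), the inclusions $A_p\subset A_p^\pm$ follow by restricting the two integrals in the $A_p$ defining supremum (over $I=(a,c)$) to $(a,b)$ and $(b,c)$ respectively; the converse $A_p^+\cap A_p^-\subset A_p$ is obtained by splitting any interval $I=(a,c)$ at its midpoint $b$ and combining both one-sided bounds on the cross terms produced by expanding the $A_p$ product.

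For (a), the heart is a one-sided reverse H\"older inequality: if $w\in A_1^+$, there exist $\varepsilon, C>0$ so that
$$\left(\frac{1}{h}\int_x^{x+h}w^{1+\varepsilon}(y)\,dy\right)^{\frac{1}{1+\varepsilon}}\le C\,\frac{1}{h}\int_x^{x+h}w(y)\,dy.$$
I would prove this by a forward-oriented Calder\'on--Zygmund decomposition: at levels $\lambda$ exceeding the mean of $w$ on $(x,x+h)$, decompose $\{M^-w>\lambda\}\cap(x,x+h)$ into the maximal forward-shifted intervals on which $w$ averages exactly $\lambda$, invoke $M^-w\le C w$ to convert forward averages back to pointwise values, and iterate to control $\int w^{1+\varepsilon}$ in the standard way, with $\varepsilon$ depending on $A_1^+(w)$. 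Given this estimate, the self-improvement is immediate: by Jensen and $A_1^+$, $M^-(w^{1+\varepsilon})\le C(M^-w)^{1+\varepsilon}\le C w^{1+\varepsilon}$, so $w^{1+\varepsilon}\in A_1^+$.

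For (b), sufficiency is routine: if $w=w_1 w_2^{1-p}$ with $w_1\in A_1^+$ and $w_2\in A_1^-$, the defining inequalities $M^-w_1\le C w_1$ and $M^+ w_2\le C w_2$, combined with H\"older on $(a,c)=(a,b)\cup(b,c)$, bound both halves of the $A_p^+$ expression by products of infima of $w_1$ and $w_2$ near $b$. For necessity, I would adapt the Rubio de Francia algorithm to the one-sided setting. Using the weighted boundedness of $M^+$ on $L^p(w)$ and of $M^-$ on $L^{p'}(w^{1-p'})$ (available from part (d) and Theorem 1.5), define
$$R^- h=\sum_{k=0}^\infty\frac{(M^+)^k h}{(2\|M^+\|_{L^p(w)\to L^p(w)})^k},\qquad R^+ g=\sum_{k=0}^\infty\frac{(M^-)^k g}{(2\|M^-\|_{L^{p'}(w^{1-p'})\to L^{p'}(w^{1-p'})})^k},$$
so that $M^+(R^-h)\le C R^- h$ and $M^-(R^+ g)\le C R^+ g$, i.e.\ $R^- h\in A_1^-$ and $R^+ g\in A_1^+$. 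Feeding in appropriate seeds $g, h$ built from $w$ and $w^{1-p'}$ then recovers a decomposition $w=w_1 w_2^{1-p}$ of the required form.

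The main obstacle will be the necessity direction of (b). In the two-sided theory the Rubio de Francia iteration exploits the self-adjointness of the Hardy--Littlewood maximal operator, but here $M^+$ and $M^-$ are \emph{mutually} adjoint, so the two factors $w_1, w_2$ play asymmetric roles and must be produced from two distinct iterations coupled through $w$. A secondary technical point is the directional Calder\'on--Zygmund decomposition behind the reverse H\"older step in (a): the usual symmetric stopping-time argument has to be replaced by a one-sided variant using only forward-shifted intervals, which forces some care with the boundary contributions at the right endpoint $x+h$.
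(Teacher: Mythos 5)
First, note that the paper offers no proof of this proposition: it is quoted verbatim from Sawyer \cite{Saw}, so the only comparison available is with the standard arguments in \cite{Saw} and \cite{MOT}. Your treatment of (d), of the easy inclusions $A_p\subset A_p^{\pm}$ in (c), and the general shape of the factorization in (b) are reasonable. But part (a), which is the part this paper actually uses (in Lemma 3.2 and Lemma 3.6), rests on a false lemma. One-sided weights need not be doubling and do \emph{not} satisfy the same-interval reverse H\"older inequality you state. Take $w(t)=e^{t}$: then $M^-w\le w$, so $w\in A_1^{+}$ with constant $1$, yet for large $h$ one has $\bigl(\frac1h\int_0^h e^{(1+\varepsilon)t}\,dt\bigr)^{1/(1+\varepsilon)}\sim e^{h}((1+\varepsilon)h)^{-1/(1+\varepsilon)}$ while $\frac1h\int_0^h e^{t}\,dt\sim e^{h}/h$, so the ratio blows up like $h^{\varepsilon/(1+\varepsilon)}$; the backward-interval version fails by the same computation. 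Consequently your step $M^-(w^{1+\varepsilon})\le C(M^-w)^{1+\varepsilon}$ is unproved (Jensen gives the \emph{opposite} inequality), and your stopping-time argument cannot repair this: $M^-w\le Cw$ only compares the average of $w$ over $(y-h,y)$ with its value at the \emph{right} endpoint $y$, so it never converts the level-set averages back into pointwise bounds on the same interval. The correct one-sided substitute (see \cite{MOT}) controls the $L^{1+\varepsilon}$ average of $w$ on an interval by the $L^{1}$ average on the adjacent interval to the right, and the proof of (a) in \cite{Saw} exploits exactly this directional shift.

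There is a second, smaller gap in (c): expanding $\bigl(\int_a^b w+\int_b^c w\bigr)\bigl(\int_a^b w^{1-p'}+\int_b^c w^{1-p'}\bigr)^{p-1}$ produces not only the two cross terms, which $A_p^{+}$ and $A_p^{-}$ do control, but also the two diagonal terms $\int_a^b w\,\bigl(\int_a^b w^{1-p'}\bigr)^{p-1}$ and $\int_b^c w\,\bigl(\int_b^c w^{1-p'}\bigr)^{p-1}$; these are precisely the $A_p$ products for the half-intervals, so iterating the splitting is an infinite regress with non-summable constants. The clean route is through the maximal function: $Mf\le\frac12(M^{+}f+M^{-}f)$, so Theorem 1.5(b) and its $M^{-}$ analogue give boundedness of $M$ on $L^{p}(w)$ and hence $w\in A_p$ by Muckenhoupt's theorem, while for $p=1$ the pointwise conditions $M^{\pm}w\le Cw$ add directly. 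Finally, your H\"older argument for (d) covers $1<p\le r$ but not $p=1$, which needs the (easy) separate observation that $M^-w\le Cw$ bounds $w^{-1}$ pointwise on $(b,c)$ by $C(c-a)\bigl(\int_a^b w\bigr)^{-1}$.
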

Perhaps it is worth pointing out that these classes not only control the boundedness of $M^{+}(M^{-})$, but also they are the right weight classes for one-sided singular integrals \cite{AFM}, and they also appear in PDE \cite{GS}.

We say a Calder\'{o}n-Zygmund kernel $K$ is a one-sided Calder\'{o}n-Zygmund kernel (OCZK) if $K$ satisfies $(1.1)$ and
$$\left| \int_{a<|x|<b}K(x)\, dx\right|\leq C, \quad 0<a<b\eqno(1.5)
$$
with support in $\mathbb{R}^{-}=(-\infty,0)$
or $\mathbb{R}^{+}=(0,+\infty)$.
The smallest constant for which (1.1) and (1.5) hold will be denoted by $C(K)$. In \cite{AFM}, Aimar, Forzani and Mart\'{\i}n-Reyes studied the one-sided Calder\'{o}n-
Zygmund singular integrals which are defined by
$$
\widetilde{T}^{+}f(x)=\lim_{\varepsilon\rightarrow0^{+}}\int_{x+\varepsilon}^{\infty}K(x-y)f(y)\,dy
$$
and
$$
\widetilde{T}^{-}f(x)=\lim_{\varepsilon\rightarrow0^{+}}\int_{-\infty}^{x-\varepsilon}K(x-y)f(y)\,dy
$$
where the kernels $K$ are OCZKs. 

\begin{theorem}[\cite{AFM}]  Let $K$ be a OCZK with
support in $\mathbb{R}^{-}=(-\infty,0)$. Then
\begin{enumerate}
\renewcommand{\labelenumi}{(\alph{enumi})}
\item $\widetilde{T}^{+}$ is bounded on $L^{p}(w) (1<p<\infty)$ if
$w\in A_{p}^{+}$.
\item  $\widetilde{T}^{+}$ maps $L^{1}(w)$ into $ L^{1,\infty}(w)$
if $w\in A_{1}^{+}$.
\end{enumerate}
\end{theorem}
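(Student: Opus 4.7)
The plan is to adapt the classical Calder\'on-Zygmund machinery to the one-sided setting, exploiting the fact that since $K$ is supported in $\mathbb{R}^{-}$, the value of $\widetilde{T}^{+}f(x)$ depends only on values of $f$ on $(x,\infty)$, so that $\widetilde{T}^{+}b$ for $b$ supported on an interval $I=(a,c)$ is automatically supported in $(-\infty,c)$. This leftward propagation is what makes the one-sided condition $A_{1}^{+}$ (which controls averages of $w$ to the left via $M^{-}w\le Cw$) the natural hypothesis.

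For the unweighted $L^{2}$ boundedness, I would note that any OCZK satisfies the classical Calder\'on-Zygmund hypotheses once (1.5) is combined with (1.1), so standard theory (or a direct Cotlar-type argument on the truncations) yields $\|\widetilde{T}^{+}f\|_{2}\le C\|f\|_{2}$. For part (b), I would then perform a one-sided Calder\'on-Zygmund decomposition at height $\lambda$ relative to $M^{+}$: writing $\{M^{+}f>\lambda\}=\bigsqcup_{j}I_{j}$ with $I_{j}=(a_{j},c_{j})$ and splitting $f=g+\sum_{j}b_{j}$ in the usual way with $\mathrm{supp}\,b_{j}\subset I_{j}$, $\int b_{j}=0$ and $\|b_{j}\|_{1}\le C\lambda|I_{j}|$. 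The good part $g$ is handled by Chebyshev together with the (bootstrapped) weighted $L^{2}(w)$ bound. For the bad part, I use that $\widetilde{T}^{+}b_{j}$ is supported in $(-\infty,c_{j})$; for $x$ to the left of an expanded interval $I_{j}^{*}=(a_{j}-|I_{j}|,c_{j})$, the cancellation $\int b_{j}=0$ combined with the gradient estimate in (1.1) gives
\[
|\widetilde{T}^{+}b_{j}(x)|\le C\,\frac{|I_{j}|}{|x-c_{j}|^{2}}\,\|b_{j}\|_{1}.
\]
Integrating against $w$ on $(-\infty,a_{j}-|I_{j}|)$ and invoking $M^{-}w\le Cw$ produces a bound by $C\lambda^{-1}\int_{I_{j}}|f|w$, and summing in $j$ yields $C\lambda^{-1}\|f\|_{L^{1}(w)}$. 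The exceptional set $\bigcup_{j}I_{j}^{*}$ is controlled by a one-sided doubling consequence of $A_{1}^{+}$ together with the weighted weak $(1,1)$ bound for $M^{+}$ from Theorem~1.5(a).

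For part (a), once (b) is known for every $w\in A_{1}^{+}$, I would obtain the $L^{p}(w)$ bound for $w\in A_{p}^{+}$ either via a one-sided version of Rubio de Francia extrapolation, built on the factorization in Proposition~1.6(b) $w=w_{1}w_{2}^{1-p}$ with $w_{1}\in A_{1}^{+}$, $w_{2}\in A_{1}^{-}$, or by proving the pointwise sharp-function estimate $M^{\#,-}(\widetilde{T}^{+}f)(x)\le CM^{+}f(x)$ and then combining the one-sided Fefferman-Stein inequality with Theorem~1.5(b).

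The main obstacle I expect is the bad-part estimate: one has to verify that the leftward asymmetry of $A_{1}^{+}$ matches exactly the leftward support propagation of $\widetilde{T}^{+}b_{j}$, so that when summing $\int_{-\infty}^{a_{j}-|I_{j}|}|\widetilde{T}^{+}b_{j}|w\,dx$ the hypothesis $M^{-}w\le Cw$ is invoked on precisely the correct half-line. The classical two-sided argument uses the symmetric doubling of $A_{1}$, and that symmetry is unavailable here; replacing it with the one-sided averages, while at the same time keeping track of the correct side of each interval, is the delicate point of the proof.
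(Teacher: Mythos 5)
This statement (Theorem 1.8) is quoted in the paper from \cite{AFM} without proof, so there is no internal argument to compare against; judged on its own, your outline correctly reconstructs the standard one-sided Calder\'on--Zygmund argument of \cite{AFM}, and in particular you identify the decisive point: $K$ supported in $\mathbb{R}^-$ makes $\widetilde{T}^{+}b_j$ propagate only to the left of $I_j=(a_j,c_j)$, so the tail sum $\int_{-\infty}^{a_j-|I_j|}|\widetilde{T}^{+}b_j|\,w$ and the enlargement $w(I_j^*)\leq Cw(I_j)$ are exactly the quantities controlled by $M^{-}w\leq Cw$, while the exceptional set itself is handled by Theorem 1.5(a). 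This is the same mechanism the paper later exploits in Lemmas 3.1, 3.4 and in the estimate $w(\tilde I)\leq Cw(I)$ of Section 4. The one point to tighten is logical order: the good-function estimate requires $\|\widetilde{T}^{+}g\|_{L^2(w)}\lesssim\|g\|_{L^2(w)}$ for $w\in A_1^+\subset A_2^+$, i.e.\ an instance of part (a), so (a) must be established first (via the unweighted $L^2$ bound, the one-sided sharp-function estimate $M^{\#,-}(\widetilde{T}^{+}f)\leq CM^{+}f$ and a one-sided Fefferman--Stein inequality, as in \cite{AFM}); extrapolating (a) \emph{from} (b) as your first alternative suggests would be circular, and endpoint weak-type extrapolation would in any case not return the strong $(p,p)$ bounds.
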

Also, a result concerning the converse of Theorem 1.8 is given in \cite{AFM}.
Inspired by \cite{CC}, \cite{Sa} and \cite{Saw}, we will study the one-sided version of
Theorem 1.4 by the aid of induction, Calder\'{o}n-Zygmund decomposition,
estimates for oscillatory integrals of the unweighted case and interpolation of operators with change of measures. In the foregoing and following, the letter
$C$ will stands for a positive constant which may vary from line to line.


\section{Main Results}
We first give the definition of one-sided oscillatory singular integral
operators $T^{+}, T^{-}$:
$$\begin{array}{rl}
\displaystyle T^{+}f(x)&=\displaystyle\lim_{\varepsilon\rightarrow0^{+}}\int_{x+\varepsilon}^{\infty}e^{iP(x,y)}K(x-y)f(y)\,dy
\\[4mm]&=\displaystyle \mathrm{p.v.}\int_{x}^{\infty}e^{iP(x,y)}K(x-y)f(y)\,dy,
\end{array}$$
and
$$\begin{array}{rl}
\displaystyle
T^{-}f(x)&=\displaystyle\lim_{\varepsilon\rightarrow0^{+}}\int_{-\infty}^{x-\varepsilon}e^{iP(x,y)}K(x-y)f(y)\,dy
\\[4mm]&=\displaystyle \mathrm{p.v.}\int_{-\infty}^{x}e^{iP(x,y)}K(x-y)f(y)\,dy,
\end{array}$$
where $P(x,y)$ is a real polynomial defined on $\mathbb{R}\times\mathbb{R}$, and the kernels $K$ are  OCZKs with support in
$\mathbb{R}^{-}$ and $\mathbb{R}^{+}$, respectively.

Now, we  formulate our result as follows:
\begin{theorem}
If $w\in A_{1}^{+}$, then there exists a constant C depending on the total
degree of $P$, $C(K)$ and $A_{1}^{+}(w)$ such that
$$
\sup_{\lambda>0}\lambda w(\{x\in \mathbb{R}:|T^{+}f(x)|>\lambda\})\leq
C\|f\|_{L^{1}(w)},     \eqno(2.1)
$$
for $f\in \mathscr S(\Bbb R)$ $($the Schwartz class$)$.
\end{theorem}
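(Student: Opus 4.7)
The strategy is induction on the degree $d$ of the polynomial $P$, following the pattern used in the two-sided weighted case of Theorem 1.4. The base case $d = 0$ is immediate: $e^{iP}$ is a unimodular constant, so $|T^+ f| = |\widetilde T^+ f|$ and Theorem 1.8(b) applies directly. In the inductive step I would first normalize by absorbing pure-$x$ monomials of $P$ as unimodular multipliers (harmless to $|T^+ f|$) and pure-$y$ monomials into $f$ (harmless to $\|f\|_{L^1(w)}$), reducing to the case where every monomial of $P$ has positive degree in both variables.

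Given $\lambda > 0$, I would next form a one-sided Calder\'{o}n-Zygmund decomposition $f = g + \sum_j b_j$ adapted to the maximal operator $M^-$ (the natural choice, since $T^+$ only sees values of $f$ to the right of each base point). This produces pairwise disjoint intervals $I_j = (a_j, b_j]$, bad parts $b_j$ supported in $I_j$ with $\int b_j = 0$ and $\|b_j\|_{L^1} \leq C\lambda|I_j|$, a good part with $|g| \leq C\lambda$ a.e., and a left-extension $I_j^*$ of each $I_j$ for which $\sum_j w(I_j^*) \leq C\lambda^{-1}\|f\|_{L^1(w)}$, the last estimate using the $A_1^+$ property of $w$. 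The good part is controlled by an $L^{p_0}(w)$ bound for $T^+$ with $p_0 > 1$ close to $1$: combining Chebyshev with $|g|\leq C\lambda$ and $\|g\|_{L^1(w)} \leq C\|f\|_{L^1(w)}$ yields the desired $\lambda^{-1}\|f\|_{L^1(w)}$ bound. The $L^{p_0}(w)$ estimate itself is obtained by interpolation of operators with change of measures (Stein-Weiss) between the unweighted $L^r$-boundedness of $T^+$ and a weighted endpoint, exploiting $w^{1+\varepsilon}\in A_1^+$ from Proposition 1.7(a) together with the factorization of $A_{p_0}^+$ from Proposition 1.7(b).

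For the bad part, restricted to $x \notin \bigcup_j I_j^*$, I would decompose each summand as
\[
T^+ b_j(x) = e^{iP(x,a_j)}\,\widetilde T^+ b_j(x) + \int_{I_j}\bigl(e^{iP(x,y)} - e^{iP(x,a_j)}\bigr) K(x-y) b_j(y)\, dy.
\]
The first term is a unimodular multiple of $\widetilde T^+ b_j$, so summing in $j$ and invoking Theorem 1.8(b) applied to $b = \sum_j b_j$ contributes at most $C\lambda^{-1}\|f\|_{L^1(w)}$. For the remainder, Taylor-expand $P(x,y) - P(x,a_j) = \sum_{k\geq 1} P_k(x)(y - a_j)^k$, where each $P_k$ is a polynomial in $x$ of degree at most $d - k$, and use $|e^{iP(x,y)} - e^{iP(x,a_j)}| \leq |P(x,y) - P(x,a_j)|$. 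Each resulting piece, after absorbing the factor $(y - a_j)^k$ into a modified kernel and, for the leading term, rewriting it as a fresh one-sided oscillatory singular integral with phase of degree at most $d - 1$, falls under the inductive hypothesis; the kernel decay for $x \notin I_j^*$ together with $\int b_j = 0$ supplies the needed summability across $j$.

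The hardest step is the bad-part analysis: the Taylor decomposition must be arranged so that every residual operator is genuinely of strictly lower polynomial degree, and the constants produced by the inductive hypothesis must be summable over $j$ without loss. A secondary but delicate point is the derivation of the auxiliary $L^{p_0}(w)$ bound for the degree-$d$ oscillatory operator $T^+$ — rather than merely for $\widetilde T^+$ or for its lower-degree analogues — since the interpolation parameters must be tuned so that the target weight lands in $A_{p_0}^+$ with quantitative control of $A_{p_0}^+(w)$ by $A_1^+(w)$, thereby preserving the explicit dependence on $C(K)$, $A_1^+(w)$ and $\deg P$ asserted in (2.1).
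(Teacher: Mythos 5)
Your outline shares the paper's broad skeleton (induction on the degree of $P$, a one-sided Calder\'on--Zygmund decomposition, Stein--Weiss interpolation with change of measures), but two of its load-bearing steps do not work as stated. First, the good part: you control it by an $L^{p_0}(w)$ bound for the full degree-$d$ operator $T^+$ with $w\in A_1^+\subset A_{p_0}^+$. No such bound is available --- it is the one-sided analogue of Theorem 1.2, which is not known here and is at least as hard as the theorem you are proving --- and your proposed derivation is circular: Stein--Weiss interpolation needs a \emph{weighted} endpoint for $T^+$ as input, and you never say where that comes from. The paper escapes this by splitting $K=K_0+K_\infty$ and decomposing the far part dyadically as $\sum_j W_j^+$; each piece $W_j^+$ is pointwise dominated by $M^+$ (giving a trivial $L^2(w^{1+\varepsilon})$ bound via Theorem 1.5 and Proposition 1.7) and has unweighted $L^2$ operator norm decaying geometrically in $j$ by Ricci--Stein's van der Corput estimates (Lemma 3.7); interpolating these two facts piece by piece gives $\|W_j^+\|_{L^2(w)}\leq C2^{-j\delta}$, which is what actually controls the good part. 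No weighted bound for the full $T^+$ is ever needed.

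Second, the bad part. Your Taylor expansion $P(x,y)-P(x,a_j)=\sum_{k\geq1}P_k(x)(y-a_j)^k$ followed by $|e^{iP(x,y)}-e^{iP(x,a_j)}|\leq|P(x,y)-P(x,a_j)|$ is only useful when $|P_k(x)(y-a_j)^k|$ is small, but $P_k(x)$ is a polynomial in $x$ of degree up to $d-k$ and $x$ ranges over the whole complement of the exceptional set, arbitrarily far from $I_j$; the bound blows up, and once you have taken absolute values you cannot ``rewrite it as a fresh oscillatory integral'' --- the oscillation is gone. This phase-comparison device is viable only in the \emph{local} regime: the paper applies it to $T_0^+$ (where $|x-y|\leq1$) with the comparison phase $R(x,y,h)+a_{kl}(y-h)^{k+l}$, for which the error is $O(|x-y|)$ uniformly on $I(h,\tfrac14)$, and then averages over the translation parameter $h$; the far bad part is instead handled by the Vargas-type interpolation of Lemma 3.2 between an unweighted $L^2$ estimate (Lemma 3.3, which is where the cancellation of the $b_I$ and the $L^2$ decay enter) and an $L^1(w)$ estimate (Lemma 3.4), producing the decay $2^{-\varepsilon s}$ in the scale-separation parameter that makes the sum over scales converge. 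A further, smaller, flaw: $\sum_j e^{iP(x,a_j)}\widetilde T^+b_j(x)$ is not $\widetilde T^+b(x)$ times a unimodular factor, since the factors depend on $j$, so Theorem 1.8(b) cannot be invoked as a black box there; one would have to rerun the H\"ormander-type argument underneath it. As written, then, the proposal has genuine gaps at both the good-part and bad-part stages.
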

We shall carry out the proof of Theorem 2.1 by induction, as in \cite{LZ},
\cite{RS} and \cite{Sa}.
Suppose $P(x,y)$ is a real polynomial in $x$ and $y$. First, we assume that 
Theorem 2.1 is valid for all polynomials which are the sums of monomials of 
degree less than $k$ in $x$ and of any degree in $y$,
together with the sums of monomials which are of degree $k$ in $x$ and of 
degree less than $l$ in $y$. Let 
$$
P(x,y)=a_{kl}x^{k}y^{l}+R(x,y),
$$
with
$$
R(x,y)=\sum_{\alpha<k,\beta}a_{\alpha\beta}x^{\alpha}y^{\beta}+
\sum_{\beta< l}a_{k\beta}x^{k}y^{\beta}   
$$
satisfying the above induction assumption.
\par 
Let us now prove that (2.1) holds for $P(x,y)$. 
Arguing as in \cite[p. 188]{RS}, by the aid of weighted theory of one-sided 
Calder\'{o}n-Zygumund operators, 
without loss of generality, we may assume $k>0,l>0$ and $|a_{kl}|\neq 0$ (for 
if $|a_{kl}|= 0$, (2.1) holds by the induction assumption). By dilation 
invariance of the operators and weights, we only need to consider the case 
$|a_{kl}|=1.$
\par
We split the kernel $K$ as
$$
K(x-y)=K(x-y)\chi_{\{|x-y|\leq1\}}(y)+K(x-y)\chi_{\{|x-y|>1\}}(y)=K_{0}(x-y)+K_{\infty}(x-y),
$$ 
where $\chi_E$ denotes the characteristic function of a set $E$,  
and consider the corresponding splitting $T^{+}=T^{+}_{0}+T^{+}_{\infty}$:\\
\begin{gather*}
T^{+}_{0}f(x)=\mathrm{p.v.}\int_{x}^{\infty}e^{iP(x,y)}K_{0}(x-y)f(y)\,dy,
\\ 
T^{+}_{\infty}f(x)=\int_{x}^{\infty}e^{iP(x,y)}K_{\infty}(x-y)f(y)\,dy.
\end{gather*}  

In Section 4, we will prove the following proposition under the induction 
assumption.
\begin{proposition}
If $w\in A_{1}^{+}$, then there exists a constant $C$ depending on the total
degree of $P$, $C(K)$ and $A_{1}^{+}(w)$ such that
$$
\sup_{\lambda>0}\lambda w(\{x\in \mathbb{R}:|T^{+}_{0}f(x)|>\lambda\})\leq C\|f\|_{L^{1}(w)} \eqno(2.2)
$$
and$$
\sup_{\lambda>0}\lambda w(\{x\in \mathbb{R}:|T^{+}_{\infty}f(x)|>\lambda\})
\leq C\|f\|_{L^{1}(w)}.\eqno(2.3)
$$
\end{proposition}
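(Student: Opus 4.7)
I would prove (2.2) and (2.3) separately. The overall strategy is a one-sided Calder\'on--Zygmund decomposition of $f$ at height proportional to $\lambda$, combined with the induction hypothesis of Theorem 2.1, the weighted theory of one-sided Calder\'on--Zygmund singular integrals (Theorem 1.8), and interpolation with change of measures (to pass from the unweighted $L^p$ bound of Theorem 1.1 to weighted $L^p$ bounds on intermediate weights coming from the factorization of Proposition 1.7(b)).

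For (2.3), I would exploit the fact that $K_\infty$ is supported in $\{|x-y|>1\}$ and satisfies $|K_\infty(x-y)|\lesssim|x-y|^{-1}$. A dyadic decomposition into annuli $\{2^{j}<y-x\le 2^{j+1}\}$, $j\ge0$, together with an integration by parts in $y$ of van der Corput type, gaining a factor $(\partial_y P)^{-1}$ of order $|x|^{-k}|y|^{1-l}$, reduces the contribution of each annulus to (a) residual oscillatory operators whose phases have strictly lower $y$-degree, hence are governed by the induction hypothesis, and (b) non-oscillatory boundary pieces that are one-sided Calder\'on--Zygmund operators to which Theorem 1.8 applies directly under $w\in A_1^+$. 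Summation over $j$ then yields (2.3), the geometric decay $2^{-j}$ being absorbed against the $A_1^+$ control of $w$ on the one-sided enlargements of balls.

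For (2.2), a one-sided Calder\'on--Zygmund decomposition adapted to $M^+$ and $A_1^+$ produces $f=g+b$ with $\|g\|_\infty\lesssim\lambda$, $b=\sum_j b_j$, $\supp b_j\subset I_j$, $\int b_j=0$, $\|b_j\|_{L^1}\lesssim\lambda|I_j|$, and $\sum_j w(I_j^*)\lesssim\lambda^{-1}\|f\|_{L^1(w)}$, where $I_j^*$ is a one-sided enlargement of $I_j$. The good part $g$ is handled by Chebyshev together with a weighted $L^2$ bound for $T_0^+$ (obtained from Theorem 1.1 by interpolation with change of measures, via Proposition 1.7(b)). For the bad part, exploiting $\int b_j=0$ I would write
$$T_0^+b_j(x)=\int_{I_j}\bigl[e^{iP(x,y)}K_0(x-y)-e^{iP(x,c_j)}K_0(x-c_j)\bigr]b_j(y)\,dy,$$
and split the bracket into a kernel-smoothness piece (bounded by $|y-c_j|/|x-c_j|^2$ via (1.1)) and a phase-difference piece $K_0(x-c_j)[e^{iP(x,y)}-e^{iP(x,c_j)}]$. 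The kernel piece is controlled by integrating over $\mathbb{R}\setminus I_j^*$ using the one-sided analogue of the classical $A_1$ estimate $\int_{x-c_j>h_j}(x-c_j)^{-2}w(x)\,dx\lesssim h_j^{-1}\inf_{I_j}M^-w$, which is available since $w\in A_1^+$. The phase-difference piece is recast, after $u=y-c_j$, as an oscillatory operator with phase $P(x,c_j+u)-P(x,c_j)$ and kernel $K_0(x-c_j)\chi_{|u|\le|I_j|}$, to be absorbed into the induction framework.

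The main obstacle, as I see it, is this last reduction. The substitution $u=y-c_j$ does not literally lower the pure $y$-degree of $P$, since $(c_j+u)^l-c_j^l$ still contains the monomial $u^l$ at top order, so the induction hypothesis does not apply verbatim. One must therefore further decompose $P(x,c_j+u)-P(x,c_j)$, isolate the persistent top-order term $a_{kl}x^k u^l$, and absorb its contribution either into the kernel-smoothness estimate (exploiting the support constraints $|u|\le|I_j|$ and $|x-c_j|\le1+|I_j|$) or into an auxiliary operator already known to be weak-$(1,1)$. Coordinating this polynomial manipulation with the strictly one-sided geometry of the $A_1^+$ enlargements $I_j^*$, so that the summation $\sum_j w(I_j^*)\lesssim\lambda^{-1}\|f\|_{L^1(w)}$ survives intact, is the technical heart of the argument.
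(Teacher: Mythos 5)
Your outline for (2.2) stalls exactly where you say it does, and the paper's resolution is quite different from anything in your plan. The paper does not use a Calder\'on--Zygmund decomposition for $T_0^+$ at all. Instead it localizes in $x$: for each $h\in\mathbb R$ it writes $P(x,y)=a_{kl}(x-h)^k(y-h)^l+R(x,y,h)$ and, for $x\in I(h,\tfrac14)$, replaces the top monomial by $a_{kl}(y-h)^{k+l}$, a polynomial in $y$ alone. The modified phase $R(x,y,h)+a_{kl}(y-h)^{k+l}$ has $x$-degree covered by the induction hypothesis, so the corresponding operator applied to $f\chi_{\{|y-h|<1/2\}}$ is weak $(1,1)$ with constant uniform in $h$ and in the coefficients. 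The error is controlled pointwise by
$$\left|e^{ia_{kl}(x-h)^k(y-h)^l}-e^{ia_{kl}(y-h)^{k+l}}\right|\leq |a_{kl}|\,|y-h|^l\,\left|(x-h)^k-(y-h)^k\right|\leq C|x-y|$$
on the support ($|x-h|<\tfrac14$, $0<y-x<1$, $|a_{kl}|=1$ after rescaling), which exactly cancels the $|x-y|^{-1}$ singularity of $K_0$ and leaves an operator dominated by $M^+$. Averaging the local estimates over $h$ gives (2.2). This is the idea your proposal is missing: you try to exploit cancellation of $b_j$ in $y$ and then cannot lower the degree after translating in $y$; the trick is to exploit locality in $x$ and lower the $x$-degree by freezing the $x$-dependence of the top monomial, which costs only an $O(|x-y|)$ error.

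For (2.3) your route is not viable as described. A van der Corput integration by parts gaining $(\partial_y P)^{-1}$ fails on the (possibly large) set where $\partial_y P$ is small, and even granting decay for each annulus, weak-$L^1$ bounds for the pieces do not sum (weak $L^1$ is not normable), so ``summation over $j$'' does not yield a weak $(1,1)$ estimate. The paper instead runs the Calder\'on--Zygmund decomposition here (for $T_\infty^+$, not for $T_0^+$): the good part is handled by a weighted $L^2$ bound $\|W_j^+\|_{L^2(w)}\leq C2^{-j\delta}$, obtained by interpolating with change of measures between the unweighted Ricci--Stein decay $\|W_j^+\|_{L^2}\lesssim 2^{-j\epsilon}$ (a $TT^*$-type estimate, Lemma 3.7) and the trivial bound $\|W_j^+\|_{L^2(w^{1+\varepsilon})}\leq C$ from $|W_j^+f|\leq CM^+f$ and $w^{1+\varepsilon}\in A_1^+$. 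The bad part requires the Vargas-type argument (Lemmas 3.3--3.5 and 3.2): split the intervals $J$ according to whether $\inf_J w<t2^{-s}$, estimate one family in $L^1(w)$ using the strictly one-sided support of $K_j(\cdot,y)$ (which places $x$ to the left of $J$, so only $M^-w\leq Cw$ is needed) and the other in $L^2$, combine into a bound for $\int_{E_\lambda^s}\min(w,t)\,dx$, integrate against $t^{\theta-1}\,dt/t$, and use $w^{1+\delta}\in A_1^+$ to convert the resulting $w^{\theta}$-estimate back to a $w$-estimate with geometric decay $2^{-\varepsilon s}$. None of this machinery appears in your proposal, and without it the summation over scales cannot be closed.
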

Obviously,  this will complete the proof of Theorem 2.1.

The rest of this paper is devoted to the argument for Proposition 2.2. Section 3 contains some preliminaries which are essential to our
proof. In Section 4, we prove Proposition 2.2;   this part is partially
motivated by \cite{LZ} and \cite{Sa}.

\section{Preliminaries}

Let $w\in A_1^+$, $f\in \mathscr S(\Bbb R)$.  We perform the following
Calder\'{o}n-Zygmund decomposition at height $\lambda>0$.
\begin{lemma}\label{Lemma}
We have a collection $\{I\}$ of non-overlapping closed intervals in $\Bbb R$
 and functions $g, b$ on $\Bbb R$  such that
\begin{gather}
f=g+b,
\\
\lambda\leq |I|^{-1}\int_{I}|f|\leq C\lambda,
\\
w\left(\bigcup I\right)\leq C\lambda^{-1}\|f\|_{L^{1}(w)},
\\
\|g\|_{L^{1}(w)}\leq C\|f\|_{L^{1}(w)},
\\
\|g\|_{\infty}\leq C\lambda,
\\
b = \sum_I b_I, \quad
\supp (b_I) \subset I,
\quad
\int b_I = 0,
\quad
\|b_I\|_{L^1} \leq C\lambda |I|.
\end{gather}
\end{lemma}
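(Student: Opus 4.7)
The plan is to construct the intervals $\{I\}$ as the closures of the connected components of the one-sided level set
\[
\Omega_{\lambda}=\{x\in\mathbb{R}:M^{+}|f|(x)>\lambda\},
\]
so that condition (3) falls out immediately from the weighted weak-type $(1,1)$ estimate for $M^{+}$ furnished by Theorem~1.5(a) (with $p=1$) under the hypothesis $w\in A_{1}^{+}$.

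First I would check that $\Omega_{\lambda}$ is open and bounded. Openness is the lower semicontinuity of $M^{+}|f|$, a pointwise supremum of the continuous family $x\mapsto h^{-1}\int_{x}^{x+h}|f|$, $h>0$. Boundedness follows from $f\in\mathscr{S}(\mathbb{R})$: Schwartz decay handles $x\to +\infty$, and the crude estimate $M^{+}|f|(x)\leq|x|^{-1}\|f\|_{L^{1}}$ (taking $h\geq|x|$) handles $x\to -\infty$. Writing $\Omega_{\lambda}=\bigsqcup_{k}(a_{k},b_{k})$ and setting $I_{k}=[a_{k},b_{k}]$, $\bar f_{I_{k}}=|I_{k}|^{-1}\int_{I_{k}}f$, I would define
\[
b_{I_{k}}=(f-\bar f_{I_{k}})\chi_{I_{k}},\qquad b=\sum_{k}b_{I_{k}},\qquad g=f-b.
\]
With these definitions (1) holds tautologically, and the support, mean-zero, and bound $\|b_{I_{k}}\|_{L^{1}}\leq 2\int_{I_{k}}|f|$ parts of (6) are immediate.

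The heart of the argument is to establish the equality $|I_{k}|^{-1}\int_{I_{k}}|f|=\lambda$ for every component, from which (2) and clean forms of (5), (6) follow. For this I would invoke the F.\,Riesz rising sun lemma applied to $G(x)=\int_{-\infty}^{x}|f|-\lambda x$: since $M^{+}|f|(x)>\lambda$ is equivalent to $\sup_{y>x}G(y)>G(x)$, the set $\Omega_{\lambda}$ is precisely the rising sun set of $G$, and the lemma will yield $G(a_{k})=G(b_{k})$, equivalently $\int_{I_{k}}|f|=\lambda(b_{k}-a_{k})$.

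Given this identity the remaining items are routine. Condition (3) is $w(\bigcup_{k}I_{k})=w(\Omega_{\lambda})\leq C\lambda^{-1}\|f\|_{L^{1}(w)}$ by Theorem~1.5(a). For (5), on each $I_{k}$ the bound $|g|=|\bar f_{I_{k}}|\leq \lambda$ is immediate, while on $\Omega_{\lambda}^{c}$ Lebesgue differentiation gives $|f(x)|\leq M^{+}|f|(x)\leq\lambda$ a.e. For (4),
\[
\|g\|_{L^{1}(w)}\leq\|f\chi_{\Omega_{\lambda}^{c}}\|_{L^{1}(w)}+\lambda\, w(\Omega_{\lambda})\leq(1+C)\|f\|_{L^{1}(w)}.
\]
The main obstacle I anticipate is the rising sun equality $G(a_{k})=G(b_{k})$: the direction $G(b_{k})\leq G(a_{k})$ is immediate because $a_{k}\notin\Omega_{\lambda}$ is a right-maximum of $G$, but the reverse needs a short argument showing that $G$ cannot attain its maximum on the compact interval $[a_{k},b_{k}]$ at an interior point (such a point would itself belong to $\Omega_{\lambda}$ and hence fail the right-maximum condition), followed by a continuity argument pushing $G(y)$ arbitrarily close to $G(a_{k})$ as $y\to a_{k}^{+}$ to force $G(b_{k})\geq G(a_{k})$.
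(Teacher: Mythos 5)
Your construction of the intervals (closures of the components of $\{M^{+}f>\lambda\}$) and of $g$, $b$, $b_I$ is exactly the paper's, and your argument is correct; the difference lies in how the weighted estimates (3.3) and (3.4) are obtained. You derive the exact equality $|I|^{-1}\int_I|f|=\lambda$ from the rising sun lemma (this is precisely the content of Lemma 2.1 of \cite{Saw}, which the paper cites instead of reproving) and then get (3.3) directly from the weighted weak $(1,1)$ inequality for $M^{+}$ in Theorem 1.5(a), after which (3.4) follows from $\|g\|_{L^\infty(\bigcup I)}\leq C\lambda$ and $\lambda\,w(\bigcup I)\leq C\|f\|_{L^1(w)}$. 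The paper instead argues locally on each component: it combines the inequality $\int_I U_I\leq\lambda^{-1}\int_I U_I|f|$, valid for every positive increasing $U_I$ (Lemma 1 of \cite{MOT} plus Lemma 2.1 of \cite{Saw}), with the existence of an increasing minorant $V_{w,I}\leq Cw$ satisfying $\int_I w\leq\int_I V_{w,I}$ (Lemma 2 of \cite{MOT}, using $w\in A_1^{+}$). Your route is shorter and perfectly legitimate given that Theorem 1.5 is already quoted in the paper; the paper's route has the merit of being independent of the characterization of the weak-type inequality for $M^{+}$ and of exhibiting the mechanism ($A_1^{+}$ weights admit increasing minorants with comparable mass) that recurs later in the article. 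The only places where you should tighten the write-up are the rising-sun equality $G(a_k)=G(b_k)$ (your sketch is the standard argument and does go through, since $b_k\notin\Omega_\lambda$ forces $G\leq G(b_k)$ on $(b_k,\infty)$ and hence $G(x)\leq G(b_k)$ for all $x\in(a_k,b_k)$) and the harmless remark that passing from the open components to their closures does not change $w$-measure.
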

\begin{proof}
Let
$$\left\{x\in \Bbb R: M^+f(x)>\lambda\right\}=\bigcup I'$$
be the component decomposition. Let $I$ be the closure of $I'$.
By Lemma 2.1 of \cite{Saw} we see that $|I|^{-1}\int_{I} |f|\geq\lambda$, which proves (3.2).
Define $b_I=\left(f-|I|^{-1}\int_{I}f\right)\chi_I$, $b = \sum_I b_I$
 and $g=f\chi_F +\sum_I |I|^{-1}\left(\int_I f\right)\chi_I$, where
 $F=\mathbb{R}\setminus \bigcup I$.
Then,
we only need to prove (3.3) and (3.4) because (3.1), (3.5) and (3.6) are
 straightforward.
 \par
Let $I$ be one of the intervals obtained above.
By Lemma 1 of  \cite{MOT} and Lemma 2.1 of \cite{Saw}, for any positive
increasing function $U_I$ on $I$ we have
\begin{equation}
\int_I U_I\leq \lambda^{-1}\int_I U_I|f|.
\end{equation}
Also, since $w\in A_1^+$, by Lemma 2 of \cite{MOT}, there exists a positive
increasing function
$V_{w,I}$ on $I$ such that
\begin{equation}
V_{w,I}\leq Cw \text{\,\, a.e. on $I$,} \qquad \int_I w \leq \int_I V_{w,I},
\end{equation}
where $C$ is independent of $I$.
By (3.8) and (3.7) with $V_{w,I}$ in place of $U_I$, we can prove (3.3)
as follows (see \cite[p. 520]{MOT}):
\begin{align*}
w\left(\bigcup I\right)&\leq \sum\int_I w\leq \sum\int_I V_{w,I}
\\
&\leq
\lambda^{-1}\sum\int_I V_{w,I}|f| \leq C\lambda^{-1}\sum\int_I |f|w\leq
C\lambda^{-1}\|f\|_{L^1(w)}.
\end{align*}
The estimate (3.4) can be proved similarly:
\begin{align*}
\|g\|_{L^1(w)}&\leq \int_F|f|w +\sum |I|^{-1}\left|\int_{I}f\right|\int_I w
\\
&\leq \int_F|f|w +C\lambda\sum\int_I V_{w,I}
\\
&\leq \int_F|f|w +C\sum\int_I V_{w,I}|f|
\\
&\leq \int_F|f|w +C\sum\int_I |f|w
\\
&\leq C\|f\|_{L^1(w)}.
\end{align*}
This completes the proof.
\end{proof}
We decompose $K_{\infty}(x,y)=e^{iP(x,y)}K_{\infty}(x-y)=\sum_{j=0}^{\infty}K_{j}(x,y)$, where
$$
K_{j}(x,y)=\varphi(2^{-j}(x-y))K_{\infty}(x,y),
$$
and $\varphi\in C_{0}^{\infty}(\mathbb{R})$ such that
$\supp(\varphi)\subset \{1/2\leq|x|\leq2\}$, $\sum_{j=0}^{\infty}
\varphi(2^{-j}x)=1$ if $|x|\geq1$.
For $j\geq 0$, we define
\begin{equation}
W_{j}^{+}(f)(x)=\int K_{j}(x,y)f(y)\,dy.
\end{equation}
Let
$$
W^{+}(f)(x)=\sum_{j=1}^{\infty}W_{j}^{+}(f)(x).
$$
Then $T_\infty=W_0^+ +W^+$.   We set
$$\mathcal{B}_{i}=\sum_{2^{i-1}<|I|\leq 2^{i}}b_{I}
\quad (i\geq 1),\,\,\,\,\,\,
\mathcal{B}_{0}=\sum_{|I|\leq1}b_{I}$$
and put $\mathcal{E}=\bigcup \tilde{I}$, where $\tilde{I}$ denotes the interval with the same right end point as $I$ and with length 100 times that of $I$.
When $x\in \mathbb{R}\setminus \mathcal{E}$, we have
\begin{eqnarray*}
W^{+}(b)(x)
&=& W^{+}\left(\sum_{i\geq 0}\mathcal{B}_{i}\right)(x)\\
&=& \sum_{i\geq 0}\sum_{j\geq 1}\int K_{j}(x,y)\mathcal{B}_{i}(y)\, dy\\
&=& \sum_{s\geq 1}\sum_{j\geq s} W^{+}_{j}(\mathcal{B}_{j-s})(x).
\end{eqnarray*}
\begin{lemma}
Suppose that $w\in A_{1}^{+}$ and $s$ is a positive integer.
For $\alpha>0$,  put
$$E_\alpha^s=\left\{x\in \Bbb R  : \left|\sum_{j\geq s}
W_j^+(\mathcal B_{j-s})(x)\right|>\alpha\right\}.$$
Then, there exists $\varepsilon>0$ such that
$$w(E_\lambda^s)\leq C\lambda^{-1} 2^{-\varepsilon s}
\int |f(x)|w(x) \,dx. $$
\end{lemma}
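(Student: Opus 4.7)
\emph{Plan.} I aim to derive Lemma 3.2 from a weighted $L^{2}$-estimate for $\sum_{j\geq s}W_{j}^{+}(\mathcal{B}_{j-s})$ via Chebyshev's inequality. Two pieces of the level set are treated separately: the exceptional set $\mathcal{E}$, where the weight is controlled directly by the $A_1^+$ hypothesis, and its complement, where the geometric gain $2^{-\varepsilon s}$ is extracted from a cancellation argument.

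For $\mathcal{E}=\bigcup\tilde I$, the fact that $\tilde I$ and $I$ share a right endpoint means the $A_1^+$ condition $M^-w\le Cw$, evaluated at $x$ close to the common right endpoint, yields the one-sided doubling $w(\tilde I)\le Cw(I)$. Combined with (3.3), this gives
\[
w(\mathcal{E})\le C\sum_I w(I)\le C\lambda^{-1}\|f\|_{L^1(w)},
\]
which already settles the lemma in any bounded range of $s$. For large $s$, I turn to $F=\mathbb{R}\setminus\mathcal{E}$, where Chebyshev reduces the remaining task to proving
\[
\Bigl\|\sum_{j\ge s}W_j^+(\mathcal{B}_{j-s})\Bigr\|_{L^2(w)}^2 \le C\,2^{-\varepsilon s}\,\lambda\,\|f\|_{L^1(w)}.
\]

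Reindex $i=j-s$ so the sum becomes $\sum_{i\ge 0}W_{i+s}^+(\mathcal{B}_i)$. Each kernel $K_{i+s}(x,y)$ is supported where $|x-y|\sim 2^{i+s}$, so for fixed $i$ the pieces $W_{i+s}^+(b_I)$, $|I|\sim 2^i$, are supported on essentially disjoint intervals, and the annuli for different $i$ overlap only a bounded number of times; a Cotlar--Stein almost-orthogonality argument, using the weighted $L^2$-boundedness of one-sided singular integrals (Theorem 1.2 applied via $A_1^+\subset A_2^+$ from Proposition 1.7), then reduces matters to the diagonal sum $\sum_i\|W_{i+s}^+(\mathcal{B}_i)\|_{L^2(w)}^2$. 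The gain of $2^{-\varepsilon s}$ on each diagonal piece is produced by $\int b_I=0$: writing, for $|I|\sim 2^i$ with right endpoint $c_I$,
\[
\int K_{i+s}(x,y)\,b_I(y)\,dy=\int\bigl(K_{i+s}(x,y)-K_{i+s}(x,c_I)\bigr)\,b_I(y)\,dy,
\]
the smooth factor $\varphi(2^{-(i+s)}(x-y))K_\infty(x-y)$ is Lipschitz of order $2^{-(i+s)}$ in $y$, which against $|I|\sim 2^i$ yields the factor $2^{-s}$; the oscillatory factor $e^{iP(x,y)}-e^{iP(x,c_I)}$ is absorbed by translating $y\mapsto y+c_I$, writing $P(x,y+c_I)=P(x,c_I)+Q(x,y)$ where $Q$ has strictly lower degree in its leading $y$-part, and invoking the induction hypothesis (Theorem 2.1 for $Q$) to control the resulting auxiliary oscillatory operator, with loss at most $2^{\delta s}$ for some $\delta$ strictly less than the cancellation gain. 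Summing and using $\|b_I\|_{L^1}\le C\lambda|I|$ together with $\sum_I|I|\le C\lambda^{-1}\|f\|_{L^1}$ completes the $L^2(w)$ bound.

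\emph{Main obstacle.} The delicate point is the presence of the oscillatory factor $e^{iP(x,y)}$: a pointwise Lipschitz bound on the phase would cost $\|P_y\|_\infty$, which can be arbitrarily large in $s$, overwhelming the cancellation gain. The remedy is structural: keep $e^{iP(x,c_I)}$ as a unit-modulus prefactor and transfer the remaining oscillation into a lower $y$-degree auxiliary operator on which the induction hypothesis provides a uniform bound. Coordinating this reduction with the one-sided almost-orthogonality, in particular the right-endpoint choice of $c_I$ consistent with both $T^+$ and the $A_1^+$-doubling, is the main technical difficulty.
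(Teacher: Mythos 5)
Your overall strategy---Chebyshev against a \emph{weighted} $L^2$ bound $\bigl\|\sum_{j\ge s}W_j^+(\mathcal B_{j-s})\bigr\|_{L^2(w)}^2\le C2^{-\varepsilon s}\lambda\|f\|_{L^1(w)}$---is not the paper's route, and it contains genuine gaps that I do not think can be repaired along the lines you sketch. First, the claim that for fixed $i$ the pieces $W_{i+s}^+(b_I)$ with $|I|\sim 2^i$ have essentially disjoint supports is false: each such piece lives on an interval of length $\sim 2^{i+s}$ to the left of $I$, while the intervals $I$ themselves can be adjacent and of length $2^i$, so up to $\sim 2^s$ supports overlap at a point. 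Accounting for this overlap is exactly where one power of $2^{-s}$ is lost, and it cannot be dismissed as bounded overlap. Second, and more seriously, your mechanism for the oscillatory factor does not work: after translating $y\mapsto y+c_I$, the polynomial $Q(x,y)=P(x,y+c_I)-P(x,c_I)$ merely loses its constant term in $y$; its $y$-degree is unchanged (e.g.\ $P(x,y)=xy^2$ gives $Q(x,y)=x(y^2+2c_Iy)$), so the induction hypothesis does not apply. Even where it does apply, the induction hypothesis is a weak $(1,1)$ bound for the full operator with a lower-degree phase; it yields no $L^2(w)$ operator bound, let alone one with a geometric gain $2^{-\varepsilon s}$ for the localized pieces. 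The gain in $s$ genuinely requires the van der Corput/$TT^*$ estimates of Ricci--Stein (Lemma 3.7 in the paper), and those are intrinsically \emph{unweighted} (they rest on Plancherel-type arguments), which is precisely why a direct weighted $L^2$ attack on the bad part is unavailable.

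The paper circumvents this by never proving a weighted $L^2$ bound for $\sum_{j\ge s}W_j^+(\mathcal B_{j-s})$. Instead it combines two one-sided estimates with complementary strengths: the unweighted $L^2$ bound with gain $2^{-s}$ (Lemma 3.3, quoted from \cite{Sa}) and a weighted $L^1$ bound with no gain but with the weight $\inf_J w$ attached to each interval (Lemma 3.4, proved from the support geometry of $K_j$ and $M^-w\le Cw$). Splitting the family $\mathcal J$ according to whether $\inf_J w<t2^{-s}$ and applying one estimate to each half gives $\int_{E_\lambda^s}\min(w,t)\,dx\le C\sum_J|J|\min(t2^{-s},\inf_J w)$ (Lemma 3.5); integrating against $t^{-1+\theta}\,dt/t$ converts this into a bound for $\int_{E_\lambda^s}w^\theta$ with gain $2^{-(1-\theta)s}$, and the power property $w^{1+\delta}\in A_1^+$ (Proposition 1.7(a)) with $\theta=1/(1+\delta)$ recovers the weight $w$ itself. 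If you want to complete your proof, you should adopt this interpolation-with-$\min(w,t)$ scheme (Vargas' argument) rather than seek a direct $L^2(w)$ estimate on the bad part. Your observation that $w(\widetilde I)\le Cw(I)$ follows from $A_1^+$ is correct, but it belongs to the proof of Proposition 2.2 (handling the exceptional set $\mathcal E$), not to Lemma 3.2 itself.
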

 Lemma 3.2 will be proved by applying a variant of interpolation
 argument of \cite{V} (see \cite{FS1, FS2}). We first give some lemmas which
 are essential to our analysis. Some of them are almost the same as their
 appearances in \cite{CC}, \cite{FS1}, \cite{FS2} and \cite{Sa}.
 Our results differ from the previous ones only in that
we set up them based on one-sided singular integrals and the weight
$w\in A_{1}^{+}$. We use some results and notations given in \cite{Sa}.
Let $\lambda>0$ and $\{\mathcal{G}_{j}\}_{j\geq 0}$ be a family of measurable functions such that
$$
\int_{I}|\mathcal{G}_{j}|\leq \lambda |I|
$$
for all intervals $I$ in $\mathbb{R}$ with length $|I|=2^{j}$.
\begin{lemma}[\cite{Sa}]
Suppose $\sum_{j\geq 0}\|\mathcal{G}_{j}\|_{L^{1}}<\infty$. Then,
for any positive integers $s$, we have
$$
\left\|\sum_{j\geq s} W^{+}_{j}(\mathcal{G}_{j-s})\right\|_{L^{2}}^{2}\leq C\lambda2^{-s}\sum_{j\geq 0} \| \mathcal{G}_{j}\|_{L^{1}}.
$$
\end{lemma}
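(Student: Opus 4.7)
The strategy is a $TT^{*}$-style almost-orthogonality argument, in the spirit of the Ricci--Stein proof in \cite{RS} and its weighted variant in \cite{Sa}. Expanding the squared $L^{2}$ norm gives
\begin{equation*}
\left\|\sum_{j\geq s} W_j^+(\mathcal{G}_{j-s})\right\|_{L^{2}}^{2} = \sum_{j,k\ge s}\iint L_{j,k}(y,z)\,\mathcal{G}_{j-s}(y)\,\overline{\mathcal{G}_{k-s}(z)}\,dy\,dz,
\end{equation*}
where $L_{j,k}(y,z)=\int K_j(x,y)\overline{K_k(x,z)}\,dx$ is the kernel of $(W_k^+)^{*}W_j^+$ and carries the phase factor $e^{i(P(x,y)-P(x,z))}$. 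My plan is to reduce the proof to establishing a decay estimate for each inner product in $|j-k|$, together with an extra $2^{-s}$ gain, and then to sum everything by a Schur test.

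To estimate $L_{j,k}$ I would combine two competing bounds. The first is a crude size/support bound obtained from the $\varphi$ cutoffs and $|K_{\infty}(u)|\le C/|u|$: on the joint support $|x-y|\sim 2^{j}$, $|x-z|\sim 2^{k}$ one has $|L_{j,k}(y,z)|\le C\,2^{-\max(j,k)}$, supported in $|y-z|\lesssim 2^{\max(j,k)}$. The second is an oscillatory decay coming from van der Corput applied to the $x$-integral: the leading term $a_{kl}x^{k}(y^{l}-z^{l})$ of $P(x,y)-P(x,z)$ has $k$-th $x$-derivative equal to $k!\,|y^{l}-z^{l}|$, while the contributions of $R(x,y)-R(x,z)$ are governed by the induction hypothesis. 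Stein's polynomial-phase van der Corput lemma yields constants depending only on the total degree of $P$, which is exactly the dependence the conclusion demands.

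The gain $2^{-s}$ emerges from the scale mismatch between the kernel $K_j$ (natural scale $2^{j}$) and $\mathcal{G}_{j-s}$ (controlled averages on intervals of length $2^{j-s}$). Partitioning the $y$- and $z$-domains into sub-intervals of length $2^{j-s}$ and $2^{k-s}$, on each of which $\mathcal{G}_{j-s}$ and $\mathcal{G}_{k-s}$ have mass at most $\lambda 2^{j-s}$ and $\lambda 2^{k-s}$, respectively, and then interpolating the two bounds above on $L_{j,k}$, one expects to obtain an estimate of each cross term of the form
\begin{equation*}
|\langle W_j^+(\mathcal{G}_{j-s}), W_k^+(\mathcal{G}_{k-s})\rangle| \le C\,\lambda\, 2^{-s}\,\rho^{|j-k|}\min\bigl(\|\mathcal{G}_{j-s}\|_{L^{1}},\|\mathcal{G}_{k-s}\|_{L^{1}}\bigr)
\end{equation*}
for some $\rho<1$. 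A Schur-type summation then produces the target bound $C\lambda 2^{-s}\sum_{j}\|\mathcal{G}_j\|_{L^{1}}$. The main obstacle, I expect, is arranging this interpolation so that the $2^{-s}$ factor is captured uniformly in every cross term without picking up constants depending on the non-leading coefficients of $P$; I would handle this exactly as in \cite{Sa}, applying van der Corput to the leading monomial $a_{kl}x^{k}y^{l}$ and absorbing $R$ by the inductive hypothesis on polynomials of strictly smaller $x$- or $y$-degree.
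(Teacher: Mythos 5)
The paper does not actually prove this lemma; it is quoted from \cite{Sa}, and the underlying argument goes back to Chanillo--Christ \cite{CC}. Your overall strategy --- expand the square, play a crude size/support bound for $L_{j,k}$ against an oscillatory bound, and extract the factor $2^{-s}$ from the mismatch between the kernel scale $2^{j}$ and the averaging scale $2^{j-s}$ of $\mathcal{G}_{j-s}$ --- is the right shape and is essentially what those proofs do. But there is a genuine gap at the central step, namely the oscillatory estimate for $L_{j,k}$.

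Van der Corput applied to the $x$-integral uses the \emph{full} coefficient of $x^{k}$ in the phase $P(x,y)-P(x,z)$, which is $\gamma(y,z)=a_{kl}(y^{l}-z^{l})+\sum_{\beta<l}a_{k\beta}(y^{\beta}-z^{\beta})$, not just the piece $a_{kl}(y^{l}-z^{l})$; the coefficients $a_{k\beta}$, $\beta<l$, are completely uncontrolled. You propose to handle the contribution of $R$ ``by the induction hypothesis,'' but the induction hypothesis is a weak $(1,1)$ operator bound for the oscillatory integral with phase $R$; it gives no pointwise information about the integral defining $L_{j,k}$ and cannot be inserted into a kernel estimate (in the paper it is used only for the local part $T_{0}^{+}$, not here). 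The mechanism missing from your sketch is a sublevel-set estimate: van der Corput gives $|L_{j,j}(y,z)|\le C2^{-j}\min\bigl(1,(2^{jk}|\gamma(y,z)|)^{-1/k}\bigr)$ with constants depending only on the degree, and then, for fixed $y$, one bounds $|\{z:|\gamma(y,z)|\le\delta\}|\le C_{l}\delta^{1/l}$ using only that the coefficient of $z^{l}$ in $\gamma(y,\cdot)$ is $-a_{kl}$ with $|a_{kl}|=1$ --- uniformly in all the other coefficients. Integrating the resulting two-regime bound against $|\mathcal{G}_{j-s}(z)|$ is exactly where the gain appears: the sublevel set is covered by $O(\delta^{1/l}2^{s-j}+1)$ intervals of length $2^{j-s}$, hence carries mass at most $C\lambda(\delta^{1/l}+2^{j-s})$, and after multiplying by the kernel size $2^{-j}$ and optimizing in $\delta$ one gets $C\lambda(2^{-\varepsilon j}+2^{-s})$ for each $y$. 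Without this step neither the $2^{-s}$ nor the decay in $j$ materializes. Relatedly, your cross-term bound with the factor $\rho^{|j-k|}$ is only asserted (``one expects to obtain''); for these kernels the $|j-k|$-decay comes from the support and size mismatch of $K_{j}(\cdot,y)$ and $K_{k}(\cdot,z)$ (it is of order $2^{-|j-k|}$ before any oscillation is used), and it must still be combined with the sublevel-set argument above to retain the $2^{-s}$.
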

For each $j\geq0$, let $\mathcal{I}_{j}$ be a family of non-overlapping closed
 intervals $I$ such that $|I|\leq2^{j}$. We assume $I$ and $J$ are non-overlapping if $I\in \mathcal{I}_{i}$, $J\in \mathcal{I}_{j}$ for $i\neq j$ and $\sum_{j\geq0}\sum_{I\in \mathcal{I}_{j}}|I|<\infty$. Put $\mathcal{I}=\bigcup_{j\geq 0}\mathcal{I}_{j}$. Let $\lambda>0$.  For each $I\in \mathcal{I}$, we associate
 $f_{I}\in L^{1}$ such that
$\int |f_{I}|\leq \lambda |I|, \supp(f_{I})\subset I.$
Define
$$
\mathcal{F}_{i}=\sum_{I\in \mathcal{I}_{i}}f_{I}.
$$
\begin{lemma}
Let $w\in A_{1}^{+}$ and $s$ be a positive integer. Then
$$
\left\|\sum_{j\geq s} W^{+}_{j}(\mathcal{F}_{j-s})\right\|_{L^{1}(w)}
\leq C_{w}\lambda\sum_{J\in \mathcal{I}}|J|\inf_{J}w,
$$
where $\inf_{J}f=\inf_{x\in J}f(x)$.
\end{lemma}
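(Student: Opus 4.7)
The plan is to estimate $\|W_j^+(f_I)\|_{L^1(w)}$ for each $I\in\mathcal I_{j-s}$ individually, using only the pointwise size of $K_j$ together with the one-sided support of $K$, and then sum. By the triangle inequality,
$$\Bigl\|\sum_{j\geq s}W_j^+(\mathcal F_{j-s})\Bigr\|_{L^1(w)}\leq \sum_{j\geq s}\sum_{I\in\mathcal I_{j-s}}\|W_j^+(f_I)\|_{L^1(w)},$$
so it suffices to show that each summand is bounded by $C_w\lambda|I|\inf_I w$, since the right-hand side then collapses to the required $C_w\lambda\sum_{J\in\mathcal I}|J|\inf_J w$.

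Fix $I=[a,b]\in\mathcal I_{j-s}$. Because $\varphi$ is supported in $\{1/2\leq|x|\leq 2\}$ and $K$ is supported in $\mathbb{R}^-$, the kernel $K_j(x,y)$ vanishes unless $y-x\in[2^{j-1},2^{j+1}]$, and in that region $|K_j(x,y)|\leq C 2^{-j}$ by (1.1). Writing $A_I^j:=[a-2^{j+1},b-2^{j-1}]$ for the set containing the support of $W_j^+(f_I)$, an application of Fubini gives
$$\|W_j^+(f_I)\|_{L^1(w)}\leq \int|f_I(y)|\int|K_j(x,y)|w(x)\,dx\,dy\leq C 2^{-j}\lambda|I|\int_{A_I^j}w(x)\,dx.$$
It remains to prove $\int_{A_I^j}w\leq C_w 2^j\inf_I w$. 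Here I would use the hypothesis $s\geq 1$, which gives $|I|\leq 2^{j-s}\leq 2^{j-1}$; this implies $A_I^j\subset[y_0-2^{j+2},y_0]$ for every $y_0\in I$, so that $A_I^j$ lies entirely to the left of any point in $I$. Combining this inclusion with the $A_1^+$ inequality $M^-w\leq A_1^+(w)\,w$ almost everywhere gives, for a.e.\ $y_0\in I$,
$$\int_{A_I^j}w\leq \int_{y_0-2^{j+2}}^{y_0}w\leq 2^{j+2}\,M^-w(y_0)\leq C_w 2^j\,w(y_0),$$
and taking the essential infimum over $y_0\in I$ produces the desired bound.

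The main point, and the step where the one-sided structure of the problem is really used, is the geometric observation that the support condition on $K$ places $W_j^+(f_I)$ entirely to the left of $I$; this is exactly the side on which the $A_1^+$ class controls averages (through $M^-$), so the full two-sided $A_1$ hypothesis is not needed. No cancellation of $K$ nor of the oscillatory factor enters here — size alone, combined with the one-sidedness of both the kernel and the weight class, is enough.
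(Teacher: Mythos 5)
Your proof is correct and takes essentially the same route as the paper's: the triangle inequality, the support and size bounds on $K_j$, the observation (using $s\geq 1$) that the support of $W_j^+(f_I)$ lies entirely to the left of $I$, and the $A_1^+$ condition $M^-w\leq Cw$ via the maximal function $M^-$. The only cosmetic difference is that you integrate $w$ over the fixed set $A_I^j$ instead of over $[y-2^{j+1},y-2^{j-1}]$ for each $y\in I$, which changes nothing.
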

\begin{proof}
By the triangle inequality we have
$$\left\|\sum_{j\geq s}W_j^+(\mathcal F_{j-s})\right\|_{L^1(w)}\leq \sum_j
\sum_{I\in \mathcal{I}_{j-s}}
\int |f_I(y)|\left(\int |K_j(x,y)|w(x)\,dx\right)\,dy.
$$
We note that $K_j(x,y)$ is supported in the interval $[y-2^{j+1}, y-2^{j-1}]$
as a function of $x$, for each fixed $y$, and
$$\sup [y-2^{j+1}, y-2^{j-1}] \leq \inf I \qquad \text{for all $y\in I\in
\mathcal{I}_{j-s}$.} $$
Also, $|K_j| \leq C2^{-j}$.  Thus we have
$$\int |f_I(y)|\left(\int |K_j(x,y)|w(x)\,dx\right)\,dy
\leq C\int |f_I(y)|\inf_I M^-(w)\,dy \leq C\lambda |I|\inf_I w,  $$
where $M^-$ is as in (1.4).
Combining the results, we get the conclusion.
\end{proof}
\par
Let $\mathcal{J}$ denote the family of intervals arising from the
Calder\'{o}n-Zygmund decomposition in Lemma 3.1.
\begin{lemma}
Let $t>0$,  $w\in A_{1}^{+}$ and $s$ be a positive integer.
Let $\mathcal B_j, E_\alpha^s$ be as above.
Then we have
\begin{equation}
\int_{E_\lambda^s}\min(w(x),t) \,dx
\leq C \sum_{J \in \mathcal J}|J| \min\left(t2^{-s},
\inf_{J} w\right).
\end{equation}
\end{lemma}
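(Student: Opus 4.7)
The plan is to interpolate between the unweighted $L^2$ bound of Lemma 3.3 and the weighted $L^1(w)$ bound of Lemma 3.4 by splitting the Calder\'on--Zygmund collection $\mathcal J$ according to the size of $\inf_J w$ relative to the truncation level $t2^{-s}$. Concretely, I would set $\mathcal J_1 = \{J\in\mathcal J : \inf_J w \leq t2^{-s}\}$ and $\mathcal J_2 = \mathcal J\setminus\mathcal J_1$, and split $\mathcal B_i = \mathcal B_i^{(1)} + \mathcal B_i^{(2)}$, where $\mathcal B_i^{(\nu)}$ retains only those $b_I$ with $I\in\mathcal J_\nu$. Writing $W^{(\nu)}(x) = \sum_{j\geq s}W_j^+(\mathcal B_{j-s}^{(\nu)})(x)$, the triangle inequality yields $E_\lambda^s\subset \{|W^{(1)}|>\lambda/2\}\cup\{|W^{(2)}|>\lambda/2\}$, so it suffices to bound the $\min(w,t)$-mass of each of these two sets.

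On the ``large-weight'' set I would use $\min(w,t)\leq t$, Chebyshev in $L^2$, and apply Lemma 3.3 to $\{\mathcal B_j^{(2)}\}$, which inherits the normalization $\int_I |\mathcal B_j^{(2)}|\leq C\lambda|I|$ for $|I|=2^j$ from (3.6) and the non-overlap of $\mathcal J$. This produces
\begin{equation*}
\int_{\{|W^{(2)}|>\lambda/2\}}\min(w,t)\,dx \leq \frac{4t}{\lambda^{2}}\|W^{(2)}\|_{L^2}^2 \leq \frac{Ct\,2^{-s}}{\lambda}\sum_{j}\|\mathcal B_j^{(2)}\|_{L^1}\leq Ct\,2^{-s}\sum_{J\in\mathcal J_2}|J|,
\end{equation*}
where the last step uses (3.6) once more. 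On $\mathcal J_2$ the factor $t2^{-s}$ coincides with $\min(t2^{-s},\inf_J w)$, so this is precisely the bound wanted on the second set.

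Symmetrically, on the ``small-weight'' set I would use $\min(w,t)\leq w$, Chebyshev in $L^1(w)$, and apply Lemma 3.4 to $\{\mathcal B_j^{(1)}\}$ (whose underlying intervals are non-overlapping, have length $\leq 2^j$, and carry $\|b_I\|_{L^1}\leq C\lambda|I|$ by (3.6)) to obtain
\begin{equation*}
\int_{\{|W^{(1)}|>\lambda/2\}}\min(w,t)\,dx \leq \frac{2}{\lambda}\|W^{(1)}\|_{L^1(w)}\leq C\sum_{J\in\mathcal J_1}|J|\inf_J w,
\end{equation*}
and on $\mathcal J_1$ the factor $\inf_J w$ equals $\min(t2^{-s},\inf_J w)$. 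Summing the two estimates yields (3.10).

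The only routine verification is that the subcollections $\{\mathcal B_j^{(\nu)}\}$ genuinely satisfy the normalization and disjointness hypotheses of Lemmas 3.3 and 3.4, which is immediate from (3.6) and the non-overlap of $\mathcal J$. There is no substantial obstacle in the argument; the essential observation is that the correct pivot for the weight truncation must be chosen equal to $t2^{-s}$, so that the $L^2$ gain $2^{-s}$ from the oscillation in Lemma 3.3 combines with the truncation level $t$ in exactly the way needed for the two pieces to merge into the single $\min$-expression on the right-hand side.
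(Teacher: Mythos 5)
Your proposal is correct and is essentially identical to the paper's proof: the paper also splits $\mathcal J$ at the threshold $t2^{-s}$ into $\mathcal J_t$ and $\mathcal J_t^c$, bounds $\min(w,t)$ by $w$ on the small-weight piece via Lemma 3.4 and by $t$ on the large-weight piece via Chebyshev and Lemma 3.3, and recombines using the observation that each factor equals $\min(t2^{-s},\inf_J w)$ on its respective subfamily. No meaningful difference in route or detail.
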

\begin{proof}
Let
$$
\mathcal{J}_{t}=\{J\in \mathcal{J}:\inf_{J}w(x)<t2^{-s}\}
$$
and $\mathcal{J}_{t}^{c}=\mathcal{J}\setminus\mathcal{J}_{t}$. For $j>0$, put
$$\mathcal{B}_{j}^{'}=\sum_{2^{j-1}<|J|\leq 2^{j},J\in \mathcal{J}_{t}}b_{J},\,\,\,\,\,\, \,\,\,\,\,\,\mathcal{B}_{j}^{''}=\sum_{2^{j-1}<|J|\leq 2^{j},J\in
\mathcal{J}_{t}^{c}}b_{J},$$
and
$$\mathcal{B}_{0}^{'}=\sum_{|J|\leq1,J\in \mathcal{J}_{t}}b_{J}, \,\,\,\,\,\, \,\,\,\,\,\,\mathcal{B}_{0}^{''}=\sum_{|J|\leq1,J\in \mathcal{J}_{t}^{c}}b_{J}.$$
Then
$\mathcal{B}_{j}=\mathcal{B}_{j}^{'}+\mathcal{B}_{j}^{''}$ for $j\geq 0$.
Define
\begin{gather*}
E'_\alpha=\left\{x\in \Bbb R : \left|\sum_{j\geq s}
W_j^+(\mathcal B'_{j-s})(x)\right|>\alpha\right\},
\\
E^{\prime\prime}_\alpha=\left\{x\in \Bbb R : \left|\sum_{j\geq s}
W_j^+(\mathcal B^{\prime\prime}_{j-s})(x)\right|>\alpha\right\},
\end{gather*}
for $\alpha>0$.
Then,  we have
$E_\lambda^s \subset E'_{\lambda/2}\cup E^{\prime\prime}_{\lambda/2}$,
and hence
\begin{align*}
\int_{E_\lambda^s} \min(w(x),t)\, dx
&\leq \int_{E'_{\lambda/2}}\min(w(x),t)\, dx
+\int_{E^{\prime\prime}_{\lambda/2}} \min(w(x),t)\, dx
\\
&\leq \int_{E'_{\lambda/2}} w(x)\, dx  +\int_{E^{\prime\prime}_{\lambda/2}}
t\, dx.
\end{align*}
By Lemma 3.3 and Lemma 3.4,  with
$\mathcal{G}_{j}=C_{1}\mathcal{B}_{j}^{''}$ and $\mathcal{F}_{j}=C_{2}\mathcal{B}_{j}^{'}$, via Chebyshev's inequality, we have
\begin{gather*}
\int_{E'_{\lambda/2}} w(x)\, dx  \leq C\sum_{J \in \mathcal J_t}|J| \inf_{J} w
= C\sum_{J \in \mathcal J_t}|J|
\min\left(t2^{-s},\inf_{J} w\right),
\\
 \int_{E^{\prime\prime}_{\lambda/2}}
t\, dx   \leq C t2^{-s}\sum_{J\in \mathcal J_t^c}|J|
=  C\sum_{J \in \mathcal J_t^c}|J| \min\left(t2^{-s},\inf_{J}w\right).
\end{gather*}
Combining these estimates, we conclude the proof of Lemma 3.5.
\end{proof}
\par
Now, we prove Lemma 3.2.  Since
$$
\int_{0}^{\infty}\min(N,t)t^{-1+\theta}\,dt/t =C_{\theta}N^{\theta},
$$
for $0<\theta<1$, $C_{\theta},\,\,N>0$. Multiplying both sides of (3.10) by
$t^{-1+\theta} (0<\theta<1)$, then integrating them on $(0,\infty)$ with respect to the measure $dt/t$, we get
\begin{align*}
\int_{E_\lambda^s} w(x)^{\theta} \,dx
&\leq C \sum_{J \in \mathcal J}|J|2^{-(1-\theta)s} \inf_{J} w^{\theta}
  \\
&\leq  C\lambda^{-1}2^{-(1-\theta)s}
\sum_{J \in \mathcal J}\inf_{J} w^{\theta} \int_J|f(x)| \,dx
\\
&\leq  C\lambda^{-1} 2^{-(1-\theta)s}\int |f(x)| w(x)^{\theta} \,dx.
\end{align*}
By Proposition 1.7, if $w\in A_{1}^{+}$, then  $w^{1+\delta}\in A_{1}^{+}$
for some $\delta>0$. Therefore, we complete the proof of Lemma 3.2 by
substituting $w^{1+\delta}$ for $w$ and putting $\theta=\frac{1}{1+\delta}$
in the above inequalities.

\begin{lemma}
Let $W^{+}_{j}$ be as in $(3.9)$.
Suppose $w\in A_{1}^{+}$. There exist $C,\delta>0$ such that
$$
\| W^{+}_{j}\|_{L^{2}(w)}\leq C2^{-j\delta}
$$
for all $j\geq 1$, where $\|\cdot\|_{L^{2}(w)}$ denotes the operator norm
on $L^2(w)$.
\end{lemma}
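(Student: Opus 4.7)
\medskip
\noindent\emph{Proof plan for Lemma 3.6.} The plan is to combine an unweighted $L^2$-decay estimate for $W_j^+$ with a uniform weighted $L^2$-bound for a slightly stronger weight, and then to interpolate via the Stein--Weiss theorem with change of measures.

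First, I would establish the unweighted decay
$$\|W_j^+\|_{L^2 \to L^2} \leq C\, 2^{-j\eta}, \qquad j\geq 1,$$
for some $\eta>0$ depending only on the total degree of $P$. The support of $K_j(x,y)$ is contained in $\{(x,y): 2^{j-1}\le y-x\le 2^{j+1}\}$, and on this set the phase $P(x,y)=a_{kl}x^k y^l+R(x,y)$ with $k,l\ge 1$ and $|a_{kl}|=1$ oscillates at scale $2^j$. A $TT^*$ argument combined with a van der Corput bound applied to the oscillatory integral with difference phase $P(x,y)-P(x',y)$, whose derivative in $y$ carries the non-trivial leading term $a_{kl}(x^k-(x')^k)\,l\,y^{l-1}+\cdots$, produces the desired decay. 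This is the one-sided analogue of the corresponding estimates in \cite{RS} and \cite{Sa}, and is the main technical ingredient.

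Next, I would record the trivial pointwise bound
$$|W_j^+ f(x)|\leq C\,2^{-j}\int_x^{x+2^{j+1}}|f(y)|\,dy\leq C\,M^+f(x),$$
which uses only that $|K_j(x,y)|\le C 2^{-j}$ and that $K_j(x,\cdot)$ is supported to the right of $x$ (since the kernel $K$ has support in $\mathbb R^-$). By Proposition 1.7(a) there exists $\kappa>0$ with $w^{1+\kappa}\in A_1^+$, and by Proposition 1.7(d) we have $A_1^+\subset A_2^+$; thus $v:=w^{1+\kappa}\in A_2^+$, and Theorem 1.5(b) applied to $M^+$ gives $\|M^+\|_{L^2(v)\to L^2(v)}\le C$. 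Consequently
$$\|W_j^+\|_{L^2(v)\to L^2(v)}\leq C$$
uniformly in $j$.

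Finally, with $\theta=1/(1+\kappa)\in(0,1)$ one has $w=1^{1-\theta}\cdot v^{\theta}$, so Stein--Weiss interpolation with change of measures applied to the two previous endpoints yields
$$\|W_j^+\|_{L^2(w)\to L^2(w)}\leq C\bigl(2^{-j\eta}\bigr)^{1-\theta}\cdot C^{\theta}=C\,2^{-j\delta},$$
with $\delta=\eta\kappa/(1+\kappa)>0$, as required. The main obstacle is the unweighted decay in the first step, since actually exploiting the oscillation of $e^{iP(x,y)}$ requires the van der Corput/$TT^*$ analysis; the weighted endpoint and the interpolation are essentially formal once the $A_1^+$ self-improvement of Proposition 1.7(a) and the pointwise domination by $M^+$ are in hand.
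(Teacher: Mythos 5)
Your proposal is correct and follows essentially the same route as the paper: the unweighted decay (which the paper imports as Lemma 3.7 from Ricci--Stein rather than reproving via $TT^*$/van der Corput), the pointwise domination $|W_j^+f|\leq CM^+f$ combined with Proposition 1.7 and Theorem 1.5 to get a uniform bound on $L^2(w^{1+\kappa})$, and Stein--Weiss interpolation with change of measures. The exponent bookkeeping $\delta=\eta\kappa/(1+\kappa)$ matches what the paper's argument implicitly yields.
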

Before proving Lemma 3.6, we first give a lemma  obtained by
Ricci-Stein.
\begin{lemma}[\cite{RS}]
For
$j\geq 1$, if $k\neq l$, we have
$$
\|W_{j}^{+}\|_{L^{2}}\leq C_{k,l}2^{-\frac{j}{2}-\min(\frac{l}{k},\frac{k}{l})\frac{j}{2}}
$$
and if $k=l$,
$$
\|W_{j}^{+}\|_{L^{2}}\leq C_{k}2^{-j}j^{\frac{1}{2}}.
$$
\end{lemma}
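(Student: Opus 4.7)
The plan is to apply the $T^*T$ method combined with van der Corput's lemma, in the spirit of Ricci--Stein. First I would write out the kernel of $(W_j^+)^*W_j^+$:
$$L(x,z) = \int e^{i[P(y,z)-P(y,x)]}\,\overline{K(y-x)}K(y-z)\,\varphi(2^{-j}(y-x))\varphi(2^{-j}(y-z))\,dy.$$
The support constraints on $\varphi$ force $|x-z|\leq C 2^j$ and $|y-x|,|y-z|\in[2^{j-1},2^{j+1}]$, on which the amplitude together with its $y$-derivative (as a bounded variation function of $y$) is controlled by $C 2^{-2j}$.

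The algebraic heart of the argument is that, because the monomial structure of $R(x,y)$ in the inductive setup excludes $x$-powers higher than $k$, the phase is a polynomial in $y$ of degree at most $k$:
$$P(y,z) - P(y,x) = Q(x,z)\,y^k + \sum_{\alpha<k} c_\alpha(x,z)\,y^\alpha,$$
where
$$Q(x,z) = a_{kl}(z^l - x^l) + \sum_{\beta<l} a_{k\beta}(z^\beta - x^\beta).$$
In particular $\partial_y^k[P(y,z)-P(y,x)] = k!\,Q(x,z)$ is genuinely constant in $y$, so van der Corput's lemma of order $k$ applied to the $y$-integration yields
$$|L(x,z)| \leq C\,2^{-2j}\,|Q(x,z)|^{-1/k},$$
while trivially $|L(x,z)| \leq C\,2^{-j}\,\chi_{\{|x-z|\leq C 2^j\}}$.

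Since $Q(x,\cdot)$ is a polynomial of degree $l$ in $z$ with leading coefficient $a_{kl}$ (normalized so that $|a_{kl}|=1$), the standard polynomial sublevel-set inequality gives $|\{z : |Q(x,z)|\leq\mu\}|\leq C_l\,\mu^{1/l}$, uniformly in the parameter $x$. Taking the pointwise minimum of the two bounds for $L$ and performing a layer-cake decomposition in $|Q|$, the integral splits at the crossover $|Q|=2^{-jk}$ and a direct computation yields
$$\sup_x \int |L(x,z)|\,dz \leq C\,2^{-j(1+\min(k/l,\,l/k))} \qquad (k\neq l),$$
while for the borderline case $k=l$ the layer-cake integral $\int t^{-1}\,dt$ produces an extra factor of $j$, giving $\sup_x\int|L(x,z)|\,dz\leq C\,j\,2^{-2j}$. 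Schur's lemma in the form $\|W_j^+\|_{L^2}^2 \leq \sup_x\int|L(x,z)|\,dz$ then gives exactly the two claimed bounds after taking square roots.

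The main technical obstacle is verifying that the polynomial sublevel-set constant $C_l$ is genuinely uniform in the parameter $x$ (which reduces to the fact that translating the constant term of a degree-$l$ polynomial with fixed leading coefficient does not enlarge the sublevel-set estimate), and organizing the layer-cake computation so that the symmetric exponent $\min(k/l,l/k)$ emerges naturally across both regimes $k<l$ and $k>l$, with the logarithmic correction appearing precisely at the diagonal $k=l$.
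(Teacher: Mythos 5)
Your argument is correct and is essentially the proof in Ricci--Stein, which is all the paper offers for this lemma (it is quoted from \cite{RS} without proof): the $T^*T$ kernel with phase $P(y,z)-P(y,x)$, van der Corput of order $k$ in the integration variable using that the $y^k$-coefficient $Q(x,z)$ is constant in $y$, the degree-$l$ sublevel-set bound in $z$ uniform in $x$ thanks to the normalization $|a_{kl}|=1$, the Schur test, and the dyadic summation yielding $\min(k/l,l/k)$ off the diagonal and the extra factor $j$ at $k=l$ are all as in the source. The only point worth making explicit is the case $k=1$, where the first-derivative form of van der Corput applies because $\partial_y\left[P(y,z)-P(y,x)\right]=Q(x,z)$ is constant, hence monotone, in $y$.
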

To prove Lemma 3.6, we apply interpolation  with change of measures \cite{SW}.
For $j\geq 1$, since
$$
|W_{j}^{+}(f)|\leq C\int_{2^{j-1}+x}^{2^{j+1}+x}\frac{|f(y)|}{|x-y|}\,dy
\leq CM^{+}(f)(x),
$$
 Theorem 1.5 and Proposition 1.7 imply that
$\|W_{j}^{+}\|_{L^{2}(w)}\leq C$ for $w\in A_{1}^{+}$.
Consequently,
\begin{equation}
\|W_{j}^{+}\|_{L^{2}(w^{1+\varepsilon})}\leq C,
\end{equation}
for some $\varepsilon>0$ for which  $w^{1+\varepsilon}\in A_{1}^{+}$ (see
Proposition 1.7). So, Lemma 3.6 follows from Lemma 3.7 and (3.11) by
interpolation  with change of measures.
\par
Lemma 3.2 and Lemma 3.6 are essential to the proof of Proposition 2.2.

\section{Proof of Proposition 2.2}

We first prove (2.2).
Take any $h\in \mathbb{R}$, and write
$$
P(x,y)=a_{kl}(x-h)^{k}(y-h)^{l}+R(x,y,h),
$$
where the polynomial $R(x,y,h)$ satisfies the induction assumption for
Theorem 2.1, and the coefficients of $R(x,y,h)$ depend on $h$.
Write
$$
T_{0}^{+}f(x)=T_{01}^{+}f(x)+T_{02}^{+}f(x),
$$
where
$$T_{01}^{+}f(x)=\mathrm{p.v.}\int_{x}^{1+x}e^{i\left(R(x,y,h)+a_{kl}(y-h)^{k+l}\right)}K(x-y)f(y)\,dy,$$
and
$$T_{02}^{+}f(x)=\mathrm{p.v.}\int_{x}^{1+x}\left\{e^{iP(x,y)}-e^{i(R(x,y,h)+a_{kl}(y-h)^{k+l})}\right\}K(x-y)f(y)\,dy.$$
\par
Now we split $f$ into three parts as follows:
$$
f(y)=f(y)\chi_{\{|y-h|<\frac{1}{2}\}}(y)+f(y)\chi_{\{\frac{1}{2}\leq|y-h|<\frac{5}{4}\}}(y)+f(y)\chi_{\{|y-h|\geq\frac{5}{4}\}}(y)=f_{1}(y)+f_{2}(y)+f_{3}(y).
$$
It is easy to see that  $|x-h|<\frac{1}{4}$ and $|y-h|<\frac{1}{2}$ imply
$|y-x|<1$,  and hence we have
$$
T_{01}^{+}f_1(x)=\mathrm{p.v.}\int e^{i(R(x,y,h)+a_{kl}(y-h)^{k+l})}K(x-y)
f_1(y)\,dy.
$$
Thus, from the induction assumption, it follows that
$$
w\left(\left\{x\in I(h,\frac{1}{4}):|T_{01}^{+}f_{1}(x)|>\lambda\right\}\right)\leq \frac{C}{\lambda}\int_{|y-h|<\frac{1}{2}}|f(y)|w(y)\,dy.\eqno(4.1)
$$
where $C$ is independent of $h$ and the coefficients of $P(x,y)$.
Here and after, $I(x,r)$ denotes the interval $(x-r,x+r)$.
\par
Notice that if $|x-h|<\frac{1}{4},\frac{1}{2}\leq|y-h|<\frac{5}{4}$, then
$|y-x|>\frac{1}{4}$. Thus
$$
|T_{01}^{+}f_{2}(x)|\leq \int_{x+\frac{1}{4}}^{x+1}|K(x-y)f_{2}(y)|\,dy
\leq CM^{+}(f_{2})(x).
$$
So we have
$$
w\left(\left\{x\in I(h,\frac{1}{4}):|T_{01}^{+}f_{2}(x)|>\lambda\right\}\right)\leq\frac{C}{\lambda}\int_{|y-h|<\frac{5}{4}}|f(y)|w(y)\,dy      \eqno(4.2)
$$
for some constant $C$ independent of $h$ and the coefficients of $P(x,y)$.
\par
Finally, if $|x-h|<\frac{1}{4},|y-h|\geq\frac{5}{4}$, then $|y-x|>1$, thus
$$
T_{01}^{+}f_{3}(x)=0.\eqno(4.3)
$$

From (4.1), (4.2) and (4.3), it follows that
$$
w\left(\left\{x\in I(h,\frac{1}{4}):|T_{01}^{+}f(x)|>\lambda\right\}\right)\leq\frac{C}{\lambda}\int_{|y-h|<\frac{5}{4}}|f(y)|w(y)\,dy,\eqno(4.4)
$$
where $C$ is independent of $h$ and the coefficients of $P(x,y)$.

Evidently, if $|x-h|<\frac{1}{4},0<y-x<1$, then
$$
\left|e^{iP(x,y)}-e^{i(R(x,y,h)+a_{kl}(y-h)^{k+l})}\right|\leq C|a_{kl}||x-y|
=C(y-x).
$$
Therefore, when $|x-h|<\frac{1}{4}$, we have
$$
|T_{02}^{+}f(x)|\leq C\int_{x}^{x+1}|f(y)|\,dy\leq CM^{+}(f(\cdot)\chi_{B(h,\frac{5}{4})}(\cdot))(x).
$$
It follows that
$$
w\left(\left\{x\in I(h,\frac{1}{4}):|T_{02}^{+}f(x)|>\lambda\right\}\right)\leq\frac{C}{\lambda}\int_{|y-h|<\frac{5}{4}}|f(y)|w(y)\,dy         \eqno(4.5)
$$
for some constant $C$ independent of $h$ and the coefficients of $P(x,y)$. From (4.4) and (4.5), it follows that the inequality
$$
w\left(\left\{x\in I(h,\frac{1}{4}):|T_{0}^{+}f(x)|>\lambda\right\}\right)\leq\frac{C}{\lambda}\int_{|y-h|<\frac{5}{4}}|f(y)|w(y)\,dy 
$$
holds uniformly in $h\in \mathbb{R}$, which implies
$$
w\left(\left\{x\in \mathbb{R}:|T_{0}^{+}f(x)|>\lambda\right\}\right)\leq\frac{C}{\lambda}\|f\|_{L^{1}(w)} 
$$
by integration with respect to $h$,
where $C$ is independent of the coefficients of $P(x,y)$.
This completes the proof of (2.2).
\par
Now, we turn to the proof of (2.3).
Recall that $T_{\infty}^{+}=W_{0}^{+}+W^{+}$.
It is easy to see that
$$\|W^{+}_{0}(f)\|_{L^1(w)}\leq C\|f\|_{L^1(w)}   \eqno(4.6)  $$
 for $w\in A_{1}^{+}$, since
\begin{eqnarray*}
\int |W^{+}_{0}(f)(x)|w(x)\, dx &\leq&
\iint |K_0(x-y)|w(x)\, dx |f(y)|\, dy
\\
&\leq& C\int M^{-}w(y)|f(y)|\, dy
\leq C\int w(y)|f(y)|\,dy.
\end{eqnarray*}
So,  in the following, we  only consider $W^{+}$.

Now, we recall the decomposition $f=g+b$ and the set
$\mathcal{E}=\bigcup \tilde{I}$ in Section 3, and we see that
\begin{eqnarray*}
&&w\left(\left\{x\in \mathbb{R}\setminus\mathcal{E}:|W^{+}(f)(x)|>\lambda \right\}\right)
\\
&\leq& w\left(\left\{x\in \mathbb{R}\setminus \mathcal{E}:|W^{+}(g)(x)|>\frac{\lambda}{2} \right\}\right)+w\left(\left\{x\in \mathbb{R}\setminus \mathcal{E}:|W^{+}(b)(x)|>\frac{\lambda}{2} \right\}\right)
\\
&\leq& C\lambda^{-2}\|W^+(g)\|^{2}_{L^{2}(w)}+
w\left(\left\{x\in \Bbb R^n ; \left|\sum_{s\geq 1}\sum_{j\geq s}
W_j^+(\mathcal B_{j-s})(x)\right|>\lambda/2\right\}\right).
\end{eqnarray*}
Form Lemma 3.6 we easily see that $W^{+}$ is bounded on $L^{2}(w)$. So,
$\lambda^{-2}\|W^+(g)\|^{2}_{L^{2}(w)}$ is bounded by
$C\lambda^{-1}\|f\|_{L^{1}(w)}$ via Lemma 3.1 (3.4), (3.5).
Checking the constants appearing in the proof of Lemma 3.2 and replacing
$K$ by $c2^{\delta s}K$,   we have
$$w\left(E_{c_\delta 2^{-\delta s}\lambda}^s\right)\leq c\lambda^{-1}
2^{-\tau s}\|f\|_{L^1(w)} , $$
where $\delta$ and $\tau$ are positive constants depending on $w$,
and $c_\delta$ is a constant satisfying
$\sum_{s\geq 1}c_\delta 2^{-\delta s}=1/2$.
Thus, we have
$$w\left(\left\{x\in \Bbb R^n ; \left|\sum_{s\geq 1}\sum_{j\geq s}
W_j^+(\mathcal B_{j-s})(x)\right|>\lambda/2\right\}\right)
\leq \sum_{s\geq 1}w\left(E_{c_\delta 2^{-\delta s}\lambda}^s\right)
\leq C\lambda^{-1}\|f\|_{L^1(w)}.$$
Therefore, we have
$$
w\left(\left\{x\in \mathbb{R}\setminus\mathcal{E}:|W^{+}(f)(x)|>\lambda \right\}\right)\leq C\lambda^{-1}\|f\|_{L^1(w)}.                   \eqno(4.7)
$$
\par
On the other hand, by Lemma 3.1 (3.3) and the estimate $w(\tilde{I})\leq Cw(I)$, which is easily proved by the condition $w\in A_1^+$, we see that
$$
w(\mathcal{E})\leq C\lambda^{-1}\|f\|_{L^{1}(w)}.    \eqno(4.8)
$$
By (4.7) and (4.8) for $w\in A_{1}^{+}$, we get
$$
w\left(\left\{x\in \mathbb{R}:|W^{+}(f)(x)|>\lambda \right\}\right) \leq C\lambda^{-1}\|f\|_{L^{1}(w)}.\eqno(4.9)
$$
The results (4.6) and (4.9) imply
$$
w\left(\left\{x\in \mathbb{R}:|T^{+}_{\infty}(f)(x)|>\lambda \right\}\right) \leq C\lambda^{-1}\|f\|_{L^{1}(w)}
$$
for $w\in A_{1}^{+}$ with a constant $C$ independent of the coefficients of
$P(x,y)$, which completes the proof of (2.3).


\end{document}